\documentclass[11pt, twoside]{amsart}
\usepackage[cp1251]{inputenc}
\usepackage{color}
\usepackage{amsthm,amssymb,latexsym}
\usepackage[centertags]{amsmath}
\usepackage[T2A]{fontenc}
\usepackage{graphs} 
\usepackage{MnSymbol}
\usepackage{bbm}
\usepackage{centernot}
\usepackage{mathrsfs}
\usepackage{amsthm}
\usepackage{amsfonts}
\usepackage{textcomp}
\usepackage{newlfont}
\usepackage{enumitem}

\newif\ifPDF
\ifx\pdfoutput\undefined\PDFfalse
\else \ifnum \pdfoutput > 0 \PDFtrue
        \else \PDFfalse
        \fi
\fi

\ifPDF
  \usepackage[pdftex]{xcolor, graphicx}
  \usepackage[colorlinks=true,linkcolor=blue, citecolor=blue]{hyperref}%

\else
  \usepackage{color}
  \usepackage[dvips]{graphicx}
  \usepackage[dvips]{hyperref}
\fi

\usepackage{tkz-graph}
\tikzset{EdgeStyle/.style = {->}}
\tikzset{LabelStyle/.style= {fill=yellow}}
\usetikzlibrary{shapes,snakes,calendar,matrix,backgrounds,folding}

\usepackage{bbm}

\usepackage[top=1in, bottom=1.25in, left=1.25in, right=1.25in]
{geometry}

\theoremstyle{definition}
\newtheorem{definition}{Definition}[section]
\theoremstyle{remark}
\newtheorem{example}[definition]{Example}

\newtheorem{remark}[definition]{Remark}

\theoremstyle{plain}
\newtheorem{thm}[definition]{Theorem}
\newtheorem{theorem}[definition]{Theorem}
\newtheorem{prop}[definition]{Proposition}
\newtheorem{lemma}[definition]{Lemma}

\newtheorem{corol}[definition]{Corollary}

\def\ol{\overline}
\def\ov{\overline}
\def\tl{\widetilde}
\def\wh{\widehat}

\newcommand{\N}{\mathbb N}
\newcommand{\Z}{\mathbb Z}

\newcommand{\be}{\begin{equation}}
\newcommand{\ee}{\end{equation}}
\newcommand{\ba}{\begin{aligned}}
\newcommand{\ea}{\end{aligned}}

\newcommand{\R}{{\mathbb R}}

\numberwithin{equation}{section}
\newcommand{\ignore}[1]{}

\begin{document}

\title{Isomoprhism of generalized Bratteli diagrams}

\author{Olena Karpel}
\address{AGH University of Krakow, Faculty of Applied Mathematics, al. Adama Mickiewicza~30, 30-059 Krak\'ow, Poland \&
B. Verkin Institute for Low Temperature Physics and Engineering,
47~Nauky Ave., Kharkiv, 61103, Ukraine}

\email{okarpel@agh.edu.pl}

\subjclass[2020]{37A05, 37B05, 37A40, 54H05, 05C60}

\keywords{Borel dynamical systems, Bratteli-Vershik model, tail-invariant measures, infinite ergodic theory.}

\date{}

\begin{abstract}

We study the notion of isomorphism for generalized Bratteli diagrams and investigate properties preserved under isomorphism. We show that every generalized Bratteli diagram is isomorphic to an irreducible generalized Bratteli diagram. We introduce the notion of a completely irreducible generalized Bratteli diagram, namely, a diagram that is isomorphic only to irreducible generalized Bratteli diagrams. We establish connections between this notion and the topological properties of the path space of a generalized Bratteli diagram and its tail equivalence relation. We examine in detail several classes of generalized Bratteli diagrams that illustrate these results.


\end{abstract}

\maketitle
\tableofcontents

\section{Introduction}\label{intro}

Bratteli diagrams are infinite graded graphs introduced by O. Bratteli in \cite{Bratteli1972} for the study of approximately finite-dimensional $C^*$-algebras. Dynamical systems were associated with Bratteli diagrams by A. Vershik in \cite{Vershik_1981, Vershik_1982}. It turned out that Bratteli diagrams and the dynamical systems arising on their path spaces provide a powerful framework for modeling a broad class of equivalence relations and dynamical systems in measurable, Cantor and Borel dynamics. 
One of the fundamental results states that any minimal homeomorphism of a Cantor space can be modelled as a Vershik map acting on the path space of an associated simple  Bratteli diagram \cite{HermanPutnamSkau1992}. 
The combinatorial structure of Bratteli diagrams provides tools for studying invariant measures and orbit structure for the corresponding dynamical systems and equivalence relations. There is vast literature on Bratteli diagrams (see, e.g., the books and surveys \cite{Durand2010, BezuglyiKarpel2016, DownarowiczKarpel2018, Putnam2018, BezuglyiKarpel_2020, DurandPerrin2022} and references therein). 

This paper focuses on the study of generalized Bratteli diagrams, a relatively new notion introduced in \cite{BezuglyiDooleyKwiatkowski2006}. 
In that work, it was shown that any aperiodic Borel automorphism of a standard Borel space 
can be modelled as a Vershik map acting on the path space of an associated generalized Bratteli diagram. The orbits of the constructed Vershik map coincide with the orbits of the so-called tail equivalence relation associated with the diagram.
While introducing the Vershik map requires introducing partial order on the set of edges of a Bratteli diagram, the tail equivalence relation does not require the order and can be introduced for every (standard or generalized) Bratteli diagram.
Moreover, with every generalized Bratteli diagram one can associate a Borel automorphism whose orbits coincide with those of the associated tail equivalence relation \cite{Kechris2024}. 
It also follows from the results of \cite{BezuglyiDooleyKwiatkowski2006} that every aperiodic hyperfinite countable Borel equivalence relation can be represented as a tail equivalence relation on the path space of a generalized Bratteli diagram.
The structure of a Bratteli diagram provides tools for describing invariants for Borel isomorphism of such equivalence relations, and for orbit equivalence of the corresponding Borel automorphisms. One of such invariants is the number of  ergodic invariant probability measures for the tail equivalence relation
\cite{Dougherty_Jackson_Kechris1994, Kechris2024, BezuglyiKarpelKwiatkowski2019, BezuglyiKarpelKwiatkowskiWata2024}.
In this paper, we entirely focus on the tail equivalence relation, and do not consider the corresponding Vershik map.

The systematic study of generalized Bratteli diagrams has started only recently (see, e.g.,  \cite{BezuglyiJorgensenSanadhya2024, BezuglyiJorgensenKarpelSanadhya2025, BezuglyiKarpelKwiatkowskiWata2024,BezuglyiJorgensenKarpelKwiatkowski2025, BezuglyiDudkoKarpel2024, BezuglyiJorgensenKarpelSanadhyaRaszeja2026}). 
Though Bratteli diagrams provide a convenient framework for studying dynamical systems in Cantor and Borel dynamics, there is no unique way to associate a diagram to a transformation. 
In general, it remains an open problem to determine whether given two Bratteli diagrams model isomorphic dynamical systems (see, e.g., \cite{DownarowiczKarpel_2019}). 
In this paper, we consider the natural notion of isomorphism for Bratteli diagrams, which can be found, for instance, in  \cite[Section 2]{HermanPutnamSkau1992} or \cite[Subsection 6.3.1]{Durand2010} for standard Bratteli diagrams, and in \cite{BezuglyiJorgensenKarpelSanadhya2025} for generalized ones. 
The diagrams isomorphic with respect to this natural isomorphism have Borel isomorphic tail equivalence relations. 

We study the invariants of isomorphism of generalized Bratteli diagrams.
In particular, we focus on the notion of irreducibility. For a Bratteli diagram, the set of vertices is partitioned into subsets $V = \bigsqcup_{n = 0}^{\infty} V_n$ called levels. The sets of vertices of each level of a generalized Bratteli diagram is countably infinite, each set $V_n$ can be identified with the same countably infinite set, e.g., the set of natural numbers or the set integers. 
The diagram is called irreducible if 
for any vertices $i, j \in V_0$ and any $n \in \N_0$ there exist 
$m 
> n$ such that there is a finite path between $i \in V_n$ and $j \in V_m$.
In general, irreducibility is not preserved under isomorphism, but we show that there are generalized Bratteli diagrams
which are isomorphic only to irreducible generalized Bratteli diagrams. We call such diagrams completely irreducible.
Complete irreducibility implies minimality of the tail equivalence relation, thus, completely irreducible generalized Bratteli diagrams can be viewed as analogues of simple standard Bratteli diagrams.
We give necessary conditions and sufficient conditions  for a generalized Bratteli diagram to be completely irreducible. 
We prove that every  generalized Bratteli diagram is isomorphic to an irreducible generalized Bratteli diagram. 
We show that irreducibility and connectedness as an undirected graph are independent properties for generalized Bratteli diagrams. We prove that topological transitivity of the tail equivalence relation implies connectedness of the Bratteli diagram, though the converse statement is not true.
The results are illustrated by numerous examples. We note that a similar notion of irreducibility can also be introduced for finite rank standard Bratteli diagrams. The notion of irreducibility is also known for Markov chains, although the tail equivalence relation is not considered there (see e.g. \cite{Seneta2006}).

The outline of the paper is as follows. In Section~\ref{sec:prelim}, we provide the notation and basic definitions that we  use throughout the paper. In Section \ref{sec:examples}, we study in detail several classes of generalized Bratteli diagrams which demonstrate various behaviours in terms of irreducibility and topological properties of the corresponding tail equivalence relation, such as minimality and topological transitivity. These diagrams also
serve later as examples for the results from Section \ref{sec:mainres}. Section \ref{sec:mainres} contains the main results of the paper. We introduce the notions of completely irreducible and relatively irreducible generalized Bratteli diagrams. We show that every generalized Bratteli diagram is either completely or relatively irreducible. We provide necessary conditions and sufficient conditions for a Bratteli diagram to be completely irreducible. 
We study the relations between connectedness, irreducibility, and topological properties of the tail equivalence relation, and of the path space of a generalized Bratteli diagram.
Our main results are contained in Theorems \ref{thm:classII}, \ref{Thm:isomredus}, \ref{thm:Class1_minimalR}.





\section{Preliminaries}\label{sec:prelim}

This section contains the main definitions concerning generalized Bratteli diagrams.  
In particular, we discuss the notions of isomorphism and irreducibility for generalized Bratteli diagrams. For more details on generalized Bratteli diagrams see, e.g., \cite{BezuglyiJorgensenKarpelSanadhya2025} or \cite{BezuglyiKarpelKwiatkowskiWata2024}.

We use the standard notation $\mathbb N, \Z, \R,$  
$\mathbb{N}_0 = \N \cup \{0\}$ for the sets of numbers, 
$|\cdot |$ denotes the cardinality of a set.


\begin{definition}\label{Def:generalized_BD} A 
\textit{(standard or generalized) Bratteli diagram} is a graded graph 
$B = (V, E)$ such that the 
vertex set $V$ and the edge set $E$ can be partitioned $V = \bigsqcup_{n=0}^\infty  V_n$ and $E = 
\bigsqcup_{n=0}^\infty  E_n$ so that the following 
properties hold: 

\begin{enumerate}[label=\upshape(\roman*), leftmargin=*, widest=iii]

\item for every $n \in \mathbb{N}_0$, the number of vertices at each level 
$V_n$ is countable, it is finite for standard Bratteli diagrams and countably infinite for generalized Bratteli diagrams. Similarly, the set $E_n$
of all edges between $V_n$ and $V_{n+1}$ is countable and it is finite for standard Bratteli diagrams and countably infinite for generalized Bratteli diagrams,

\item for every edge $e\in E$, we define the 
range and 
source maps $r$ and $s$ such that $r(E_n) = V_{n+1}$ and 
$s(E_n) = V_{n}$ for $n \in \N_0$,

\item for every vertex $v \in V \setminus V_0$, we 
have $|r^{-1}(v)| < \infty$ (the condition holds automatically for standard Bratteli diagrams).
\end{enumerate}
\end{definition}

\begin{remark}
For all $n$, the edges of $E_n$ always have sources in $V_{n}$ and ranges in $V_{n+1}$, hence in the pictures all the edges should be directed downwards. For the better readability of figures, we draw all Bratteli diagrams as unordered graphs.
\end{remark}

\begin{remark}
    For standard Bratteli diagrams, there is a convention which comes from the paper of O. Bratteli \cite{Bratteli1972} to assume that $V_0 = \{v_0\}$ is a single point. Since this paper mostly focuses on the case of generalized Bratteli diagrams, for the sake of the uniformity of notation, we drop this assumption and allow $V_0$ to be arbitrary finite set in the case of standard Bratteli diagrams. 
\end{remark}

Let $B$ be a (standard or generalized) Bratteli diagram.
The set $V_n$ is called the \textit{$n$th level} of the diagram $B$. For generalized Bratteli diagrams, we usually identify each $V_n$ either with $\N$ (or $\N_0$) or with $\Z$. If the vertices of each level of a generalized Bratteli diagram are enumerated by $\N$ or $\N_0$, we call such diagram \textit{one-sided}, if the the vertices are enumerated by $\Z$, the diagram is called \textit{two-sided}.

To define the \textit{path space} of a generalized Bratteli diagram
$B$, consider 
a (finite or infinite) sequence of edges
 $ x = (e_i: e_i\in E_i)$ (it is called a \textit{path}) such 
that $s(e_i)=r(e_{i-1})$. Denote the set of all infinite
paths $x$ starting at a vertex in $V_0$ by $X_B$ and call 
it the \textit{path space} of the diagram $B$. For a finite path
$\ol e = (e_0, ... , e_n)$, let 
$s(\ol e) = s(e_0)$ and $r(\ol e) = r(e_n)$. The set
$$
    [\ol e] := \{x = (x_i) \in X_B : x_0 = e_0, ..., x_n = e_n\}, 
$$ 
is called the \textit{cylinder set} associated with $\ol e$.

The \textit{topology} on the path space $X_B$ is generated by
cylinder sets that are clopen in this topology.
This topology coincides with the topology defined by the 
following metric on $X_B$: for $x = (x_i), \, y = (y_i)$, set 
$$
\mathrm{dist}(x, y) = \frac{1}{2^N},\ \ \ N = \min\{i \in \N_0 : 
x_i \neq y_i\}.
$$
The path space $X_B$ is a zero-dimensional Polish space and, therefore, a standard Borel space. In general, $X_B$ is not locally compact. 

A matrix $A = (a_{ij})$ is called \textit{infinite} 
(or countably infinite) if its rows and columns
are indexed by the same countably infinite set.
For vertices $v \in V_m$ and $w \in V_{n}$, 
denote 
by $E(v, w)$ the set of all finite paths between $v$ and $w$. 
Set $f^{(n)}_{vw} = |E(v, w)|$ for every $w \in V_n$ and $v \in 
V_{n+1}$. In other words, $f^{(n)}_{vw}$ is a number of edges between the vertices $w \in V_n$ and $v \in V_{n+1}$. In such a way,  we associate with the generalized 
Bratteli diagram $B = (V,E)$ 
a sequence of non-negative countably infinite matrices $(F_n)$, 
$n \in \N_0$, 
called the \textit{incidence  matrices}: 
\begin{equation}\label{Notation:f^n}
    F_n = (f^{(n)}_{vw} : v \in V_{n+1}, w\in V_n),\ \   
    f^{(n)}_{vw}  \in \N_0.
\end{equation}

We will write $B = B(F_n)$. The structure of a (standard or generalized) generalized Bratteli diagram $B=(V, E)$ is 
completely determined by the sequence of its incidence matrices 
$(F_n), 
\, n \in \N_0$.  For each $n 
\in \N_0$, the matrix $F_n$ has at most finitely many non-zero 
entries in each row, and none of its rows or columns are entirely 
zero. For a generalized Bratteli diagram $B$, a column of $F_n$ may have a finite or infinite number of 
non-zero entries. 
If  $F_n = F$ for every $n \in 
\N_0$, then the diagram $B$ is called \textit{stationary.} We 
will write $B = B(F)$ in this case.

\begin{definition}\label{Def:Tail_equiv_relation}
Two paths $x= (x_i)$ and $y=(y_i)$ in $X_B$ are called 
\textit{tail equivalent} if there exists an $n \in \mathbb{N}_0$ 
such that $x_i = y_i$ for all $i \geq n$. This notion defines a \textit{countable Borel equivalence relation} $\mathcal R$ in the path space $X_B$, which is called the \textit{tail equivalence relation}.
\end{definition}

Denote by $\mathcal{O}(x)$ the orbit of a point $x \in X_B$ under the tail equivalence relation:
$$
\mathcal{O}(x) = \{y \in X_B : (x,y) \in \mathcal{R}\}.
$$
The tail equivalence relation is called \textit{minimal} if the orbit of every point is dense, and it is called \textit{topologically transitive} if there exists a point with a dense orbit (see e.g. \cite{KatokHasselblatt1995} or \cite{Bruin2022}). 

The following notion of isomorphism can be found, for instance, in \cite[Section 2]{HermanPutnamSkau1992} and \cite[Subsection 6.3.1]{Durand2010} for standard Bratteli diagrams, and in \cite[Definition 2.8]{BezuglyiJorgensenKarpelSanadhya2025} for generalized Bratteli diagrams.

\begin{definition} \label{def_isom BD}
Two (standard or generalized) Bratteli diagrams $B = (V, E)$ and $B' = 
(V', E')$ 
are called \textit{isomorphic} if there exist two sequence of
bijections $(g_n : V_n \rightarrow V_n')_{n \in \N_0}$ and 
$(h_n : E_n \rightarrow E_n')_{n \in \N_0}$ such that 
for every $n \in \N_0$, we have $g_n(V_n) = V_n'$ and $h_n(E_n) = 
E_n'$, and $s' \circ h_n = g_n \circ s$, $r' \circ h_n = 
g_n \circ r$,  where $s'$ and $r'$ are source and range maps in $B'$.
\end{definition}

In most of the studies concerning standard and generalized Bratteli diagrams, one is interested in the structure of the diagram, which is completely determined by the sequence of incidence matrices. 
Thus, in this paper, we focus on the following definition rather than on the definition of isomorphism:


\begin{definition}
Let $B = (V,E)$ and $B' = (V',E')$ be two (standard or generalized) Bratteli diagrams with the corresponding sequences of incidence matrices $(F_n)$ and $(F_n')$. Assume that for every $n \in \mathbb{N}_0$ there exists a sequence of bijections $g_n \colon V_n \rightarrow V'_n$ such that $g_n(V_n) = V'_n$ and $f^{(n)}_{vw} = {f'}^{(n)}_{g_{n+1}(v),g_{n}(w)}$. Then we say that the diagrams $B$ and $B'$ are \textit{isomophic with respect to the sequence of vertex bijections $(g_n)$} and denote $B \simeq B'$. 
\end{definition}

If the incidence matrices of $B$ and $B'$ are $0-1$ matrices, then the usual definition of isomorphism and the definition of isomorphism with respect to the sequence of vertex bijections coincide. If, for some $n \in \N_0$, the vertices $v \in V_{n+1}$ and $w \in V_n$ in the diagram $B$ are connected by multiple edges, then we are not concerned with how the set of edges $E(v, w)$ is mapped into the set of edges $E(g_{n+1}(v), g_n(w))$. In other words, if $B \simeq B'$, there may exist multiple isomorphisms mapping $B$ to $B'$. Clearly, $\simeq$ is an equivalence relation.




Below we formulate the fact that the diagrams $B$ and $B'$ are isomorphic in terms of their incidence matrices.
By a \textit{permutation matrix} we will mean a matrix that has exactly one entry of $1$ in each row and each column with all other entries equal to $0$. If $P$ is a permutation matrix, then it is easy to see that $PP^T = P^TP = I$, where $I = (I_{kl})$ is a (finite or infinite) square matrix with $I_{kl} = 1$ for $k = l$ and $I_{kl} = 0$ for $k \neq l$. Clearly, if the diagram $B$ is isomophic to the diagram $B'$ with respect to the sequence of vertex bijections then the following result holds.

\begin{prop}\label{thm:isomBD}
    Let $B= B(F_n)$ and $B' = B'(F'_n)$ be (standard or generalized) Bratteli diagrams. Then $B \simeq B'$ if and only if there exist permutation matrices $P_n$ such that for all $n \in \N_0$:
    $$
    F'_n = P_n F_n P_{n+1}^T.
    $$
\end{prop}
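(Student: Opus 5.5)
The plan is a direct translation between vertex bijections and permutation matrices, followed by one entrywise matrix computation. No earlier result is needed beyond the definitions.

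\emph{Setup.} To a bijection $g_n\colon V_n\to V_n'$ I attach the permutation matrix $Q_n$ with rows indexed by $V_n'$ and columns by $V_n$, where $(Q_n)_{v'w}=1$ if $v'=g_n(w)$ and $0$ otherwise. Conversely, every permutation matrix of this shape arises from exactly one bijection. Since each row and column of $Q_n$ has exactly one nonzero entry, all products below are finite sums, even for countably infinite matrices.

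\emph{Computation.} For $v'\in V'_{n+1}$ and $w'\in V'_n$,
$$
(Q_{n+1}F_nQ_n^T)_{v'w'}=\sum_{v\in V_{n+1}}\sum_{w\in V_n}(Q_{n+1})_{v'v}\,f^{(n)}_{vw}\,(Q_n)_{w'w}=f^{(n)}_{g_{n+1}^{-1}(v'),\,g_n^{-1}(w')}.
$$
Hence the defining condition $f^{(n)}_{vw}={f'}^{(n)}_{g_{n+1}(v),g_n(w)}$ for all $v,w$ is equivalent to $F'_n=Q_{n+1}F_nQ_n^T$.

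\emph{Both directions.} If $B\simeq B'$ via $(g_n)$, the computation gives the matrix identity for every $n$. Conversely, given permutation matrices satisfying the identity, I read off the bijections $g_n$ from them, and the same computation read backwards returns the entrywise condition, so $B\simeq B'$.

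\emph{Main obstacle: indexing.} Since $F_n$ has rows indexed by $V_{n+1}$ and columns by $V_n$, the left factor must be the permutation for level $n+1$ and the right factor the transpose of the permutation for level $n$. The statement's ``$P_nF_nP_{n+1}^T$'' only matches this under an appropriate labeling of the $P$'s. I would state explicitly that $P_n$ denotes the permutation matrix acting on the row index set of $F_n$, namely $V_{n+1}$, or equivalently relabel, so that the displayed formula is exactly $F'_n=Q_{n+1}F_nQ_n^T$. With that convention fixed, the rest is the routine entrywise check above.
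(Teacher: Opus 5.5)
\textbf{The final paragraph fails.} Your entrywise computation is correct. It is also all the paper relies on, since the paper gives no argument beyond ``clearly''. The identity $(Q_{n+1}F_nQ_n^T)_{v'w'}=f^{(n)}_{g_{n+1}^{-1}(v'),\,g_n^{-1}(w')}$ shows that $B\simeq B'$ via $(g_n)$ if and only if $F'_n=Q_{n+1}F_nQ_n^T$ for all $n$.

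The problem is your last paragraph: no labelling turns the printed $P_nF_nP_{n+1}^T$ into $Q_{n+1}F_nQ_n^T$.
\begin{itemize}
\item With your convention $P_n:=Q_{n+1}$, the right factor becomes $P_{n+1}^T=Q_{n+2}^T$. Its rows are indexed by $V_{n+2}$, not by the column set $V_n$ of $F_n$.
\item Any reindexing $P_n=Q_{\sigma(n)}$ would need $\sigma(n)=n+1$ and $\sigma(n+1)=n$ for all $n$, which is contradictory.
\item If all levels are identified with one index set, the product is defined, but the printed identity is not equivalent to isomorphism. It lets the vertices of level $n+1$ be permuted by $P_n$ (as rows of $F_n$) and by $P_{n+2}$ (as columns of $F_{n+1}$) independently.
\end{itemize}

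\textbf{Counterexamples.} For the ``if'' direction, take two vertices per level and set
\begin{itemize}
\item $S=\begin{pmatrix}0&1\\1&0\end{pmatrix}$, $F_0=\begin{pmatrix}1&1\\0&1\end{pmatrix}$, $F_1=\begin{pmatrix}2&0\\0&1\end{pmatrix}$, and $F_n=I$ for $n\ge2$;
\item $P_0=P_1=I$ and $P_n=S$ for $n\ge 2$.
\end{itemize}
Then $F'_n:=P_nF_nP_{n+1}^T$ gives $F'_0=F_0$, $F'_1=\begin{pmatrix}0&2\\1&0\end{pmatrix}$, and $F'_n=I$ for $n\ge2$. In $B$ the unique level-$1$ vertex with two incoming edges has two outgoing edges; in $B'$ it has one. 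So $B\not\simeq B'$. Adding diagonal ones on the remaining vertices gives a generalized example.

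The ``only if'' direction fails too. Take the stationary $F=\begin{pmatrix}2&1\\0&1\end{pmatrix}$, whose only symmetry $RFC^T=F$ is $R=C=I$. Let $g_0=g_1=\mathrm{id}$ and $g_n$ be the swap for $n\ge2$, so that $F'_n=Q_{n+1}FQ_n^T$. A solution of the printed identity would need $P_1=I$ from $n=0$ and $P_1=S$ from $n=1$.

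\textbf{What to do instead.} The statement has its subscripts interchanged. With the paper's convention that the rows of $F_n$ are indexed by $V_{n+1}$, it should read $F'_n=P_{n+1}F_nP_n^T$, with $P_n$ the matrix of $g_n$. Equivalently, the printed form is correct for the transposes $F_n^T$. Your computation proves exactly this corrected statement. Present it as a correction of the formula, not as a relabelling that makes the printed formula true.
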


To illustrate Definition \ref{def_isom BD} and Proposition \ref{thm:isomBD}, we consider the following example. This example can also be found in \cite[Example 2.9]{BezuglyiJorgensenKarpelSanadhya2025}. Here, we provide an explicit description of permutation matrices which are used for the isomorphism.


\begin{example}[Isomorphic generalized Bratteli 
diagrams, Figure~\ref{Fig:Isom_BD}]\label{Ex:isom_bd}
Let $B(F)$ be a stationary generalized Bratteli diagram such that 
every level of $B$ is identified with $\Z$, and the incidence 
matrix $F = (f_{ij})_{i,j \in \Z}$ has entries
$$
f_{ij} = 
\left\{
\begin{aligned}
& 2, \mbox{ for } i = j,\\
& 1, \mbox{ for } |i - j| = 1,\\
& 0, \mbox{ otherwise. }
\end{aligned}
\right.
$$ 
Let $B'(F')$ be a stationary generalized Bratteli 
diagram such that every level of $B'$ is identified with $\N_0$, 
and its incidence matrix $F' = (f'_{ij})_{i,j \in \N_{0}}$ has 
entries $f'_{00} = f'_{11} = 2$, $f'_{01} = 1, f'_{02} = 1, f'_{10} = 1, f'_{13} = 1, f'_{20} = 1, f'_{31} = 1$,
and for $i,j \notin \{0,1\}$: 
$$
f'_{ij} = 
\left\{
\begin{aligned}
& 2, \mbox{ for } i = j,\\
& 1, \mbox{ for } |i - j| = 2,\\
& 0, \mbox{ otherwise. }
\end{aligned}
\right.
$$ 
The diagrams $B$ and $B'$ (elaborated in 
Figure~\ref{Fig:Isom_BD}) are isomorphic. Indeed, the bijections 
$(g_n : V_n \rightarrow V_n')_{n \in \N_0}$ and $(h_n : E_n 
\rightarrow E_n')_{n \in \N_0}$ that give the isomorphism are 
defined as follows: since the diagrams are stationary, we set,
for every $n \in \N_0$, $g_n = g : \mathbb{Z} \rightarrow 
\mathbb{N}_0$ 
$$
g (n) = 
\left\{
\begin{aligned}
&2n, \mbox{ if } n \geq 0,\\
&- 2n - 1, \mbox{ if } n < 0.
\end{aligned}
\right.
$$ 
The bijection $h_n : E_n \rightarrow E_n'$ is defined as follows: 
the two vertical edges in the diagram $B(F)$ with range $i \in 
V_n$ are mapped to the two vertical edges in the diagram $B'(F')$ 
with range $g(i) \in V_n$. For $i > 0$, the 
non-vertical edge with range $i$ coming from left (respectively 
from the right) is mapped to the non-vertical edge 
with range 
$g(i) \in V'_n$ coming from the left (respectively from the 
right). For $i < 0$, the 
non-vertical edge with range $i$ coming from left (respectively 
from the right) is mapped to the non-vertical edge 
with range 
$g(i) \in V'_n$ coming from the right (respectively from the 
left). For the non-vertical edges with range $0 \in V_n$, 
the mapping  can be seen from the 
Figure~\ref{Fig:Isom_BD}. It is easy to see that with the above 
bijections the two diagrams are isomorphic.

\begin{figure}
\unitlength=1cm
\begin{graph}(11,4)
 \roundnode{V11}(2,3)
 \roundnode{V12}(4,3)
 \roundnode{V13}(6,3)
 \roundnode{V14}(8,3)
  \roundnode{V15}(10,3)
 \roundnode{V21}(2,1)
 \roundnode{V22}(4,1)
 \roundnode{V23}(6,1)
  \roundnode{V24}(8,1)
    \roundnode{V25}(10,1)
  %
 %
 \graphlinewidth{0.025}

 \bow{V21}{V11}{0.09}
  \bow{V21}{V11}{-0.09}
    \edge{V22}{V11}
    \edge{V21}{V12}
     
 \bow{V22}{V12}{0.09}
 \bow{V22}{V12}{-0.09}

 \bow{V23}{V13}{0.09}
  \bow{V23}{V13}{-0.09}

 \edge{V23}{V12}
    \edge{V22}{V13}
    
     \bow{V24}{V14}{0.09}
  \bow{V24}{V14}{-0.09}
  
   \edge{V24}{V13}
    \edge{V23}{V14}
    
         \bow{V25}{V15}{0.09}
  \bow{V25}{V15}{-0.09}
  
   \edge{V25}{V14}
    \edge{V24}{V15}

 \freetext(10.9,3){$\ldots$}
  \freetext(10.9,1){$\ldots$}
   \freetext(1,3){$\ldots$}
  \freetext(1,1){$\ldots$}
  \freetext(2,0.5){$\vdots$}
  \freetext(4,0.5){$\vdots$}
    \freetext(6,0.5){$\vdots$}
      \freetext(8,0.5){$\vdots$}  
        \freetext(10,0.5){$\vdots$} 
    
\end{graph}

\begin{graph}(11,4)
 \roundnode{V11}(2,3)
 \roundnode{V12}(4,3)
 \roundnode{V13}(6,3)
 \roundnode{V14}(8,3)
  \roundnode{V15}(10,3)
 \roundnode{V21}(2,1)
 \roundnode{V22}(4,1)
 \roundnode{V23}(6,1)
  \roundnode{V24}(8,1)
    \roundnode{V25}(10,1)
  %
 %
 \graphlinewidth{0.025}

 \bow{V21}{V11}{0.09}
  \bow{V21}{V11}{-0.09}
    \edge{V22}{V11}
    \edge{V21}{V12}
     \edge{V21}{V13}
     
 \bow{V22}{V12}{0.09}
 \bow{V22}{V12}{-0.09}

 \bow{V23}{V13}{0.09}
  \bow{V23}{V13}{-0.09}

 \edge{V23}{V11}
    \edge{V22}{V14}
    
     \bow{V24}{V14}{0.09}
  \bow{V24}{V14}{-0.09}
  
   \edge{V24}{V12}
    \edge{V23}{V15}
    
         \bow{V25}{V15}{0.09}
  \bow{V25}{V15}{-0.09}
  
   \edge{V25}{V13}

 \freetext(10.9,3){$\ldots$}
  \freetext(10.9,1){$\ldots$}
  \freetext(2,0.5){$\vdots$}
  \freetext(4,0.5){$\vdots$}
    \freetext(6,0.5){$\vdots$}
      \freetext(8,0.5){$\vdots$}  
        \freetext(10,0.5){$\vdots$} 
    
\end{graph}
\caption{Isomorphic generalized Bratteli diagrams $B$ and 
$B'$.}\label{Fig:Isom_BD}
\end{figure}

We can formulate the fact that the diagrams $B$ and $B'$ are isomorphic in terms of their incidence matrices. Let $P$ be a $\mathbb{N}_0 \times \mathbb{Z}$ matrix such that
$$
p_{ij} = 
\left\{
\begin{aligned}
& 1, \mbox{ for } i = g(j),\\
& 0, \mbox{ otherwise. }
\end{aligned}
\right.
$$ 
It is straightforward to verify that $F' = PFP^T$.
\end{example}





\begin{definition}\label{Def:irreducible_GBD} 
Let $B = B(F_n)$ be
a generalized Bratteli diagram. Assume that all levels $V_i$ of $B$ are identified with a set $V_0$ (e.g. $V_0 = 
\mathbb{N}$ or $\mathbb{Z}$). Then $B$ is called  \textit{irreducible} if 
for any vertices $i, j \in V_0$ and any $n \in \N_0$ there exist 
$m 
> n$ such that there is a finite path between $i \in V_n$ and $j \in V_m$. In 
other words, the $(j, i)$-entry of the matrix $F_{m-1} \cdots 
F_n$ 
is non-zero. Otherwise, the diagram is called \textit{reducible}.
\end{definition}

\begin{remark}
Similarly to the case of finite matrices, a countably infinite matrix $A = (a_{ij})$ is called 
\textit{irreducible} if for every pair of indices $i,j$ there 
is $n > 0$ such that $a^{(n)}_{ij} > 0$, where $a^{(n)}_{ij}$ are elements of the matrix $A^n$. 
A stationary generalized Bratteli diagram is
irreducible if and only if the corresponding incidence matrix is 
irreducible.
Denote $$
p(i) = \gcd\{n:a^{(n)}_{ii} > 0\}.
$$
Then $p(i)$ is called the \textit{period of index} $i$. For an 
irreducible matrix $A$, the periods of all indices are the same 
and called the \textit{period of $A$}. An irreducible matrix with 
period one is called \textit{aperiodic} (see \cite{Kitchens1998} for 
more definitions and results about infinite non-negative matrices).
\end{remark}

\begin{remark}
    We can introduce similar notion of irreducibility for standard Bratteli diagrams, which have the same number of vertices on each level. Such diagrams are diagrams of finite rank (see \cite{BezuglyiKwiatkowskiMedynetsSolomyak2013}). 
\end{remark}

Recall that we identify the vertices of each level $V_n$ of a generalized Bratteli diagram $B$ with the same countably infinite set, usually $\N$ or $\Z$. 
After such identification,  we will call the edges $e_n$ for which $s(e_n) = i \in V_n$ and $r(e_n) = i \in V_{n+1}$ \textit{vertical} and the edges with $s(e_n) = i \in V_n$ and $r(e_n) = j \in V_{n+1}$ with $i \neq j$ \textit{slanted}. 
We call an infinite path \textit{vertical} if all its edges are vertical and \textit{slanted} if all its edges are slanted. It was noticed in \cite[Remark 2.10]{BezuglyiJorgensenKarpelSanadhya2025} that, in general, the property of irreducibility is not preserved under an isomorphism of generalized Bratteli diagrams. For the readers convenience, we repeat the remark below.

\begin{remark}\label{Rem:isomvsirred}\cite{BezuglyiJorgensenKarpelSanadhya2025}
Note that, in general, isomorphism of generalized Bratteli diagrams does not preserve irreducibility. For instance, stationary diagram $B$ from Example~\ref{Ex:isom_bd} is irreducible since starting from every vertex we eventually reach any other vertex for some sufficiently low level. It is easy to see that diagram $B'$ is also irreducible.
    Define another stationary generalized Bratteli diagram 
$B''=(V'', E'')$ isomorphic to $B$ with every level identified with $\Z$. Consider the following sequence of  bijections $g_n \colon V_n \rightarrow V_n''$ between vertices of $B$ and $B''$ on level $n$:
    $$
    g_n(i) = i + n.
    $$
    Thus, the bijections $g_n$ do not change the level $V_0$, shift all the vertices of level $V_1$ by $1$ to the right and all vertices of level $V_n$ by $n$ to the right. 
    For all $n$, the corresponding sequence of bijections $h_n \colon E_n \rightarrow E''_n$ between edges will map all edges outgoing from vertex $i \in V_n$ to the edges with the source $i + n \in V'_n$ and the range 
   in the set $\{i+n, i+n+1, i+n+2\}.$ 
In particular,
the edge with source $i \in V_n$ and range $i - 1 \in V_{n+1}$ will be mapped to the vertical edge between $i+n \in V''_n$ and $i+n \in V''_{n+1}$. Thus, for every $n \in \N_0$, it will not be possible to reach from vertex $i \in V''_n$ any vertex $j < i$ for any level $m > n$. The incidence matrix $F''$ of $B''$ is lower triangular (here we indicate the main diagonal of $F''$ with bold font):
    $$
    F'' = 
    \begin{pmatrix}
       \ddots & \vdots & \vdots & \vdots & \vdots & \udots\\
       \ldots & \textbf{1} & 0 & 0 & 0 &\ldots\\
       \ldots & {2} & \textbf{1} & 0 & 0 & \ldots\\   
       \ldots & {1} & 2 & \textbf{1} & 0 & \ldots\\ 
       \ldots & 0 & 1 & 2 & \textbf{1} & \ldots\\     
       \udots & \vdots & \vdots & \vdots & \vdots & \ddots
    \end{pmatrix}.
    $$
\end{remark}


\section{Classes of generalized Bratteli diagrams}\label{sec:examples}

In this section, we examine in detail several classes of generalized Bratteli diagrams that exhibit a range of behaviours with respect to irreducibility and the topological properties of their tail equivalence relations.
These diagrams also serve as examples illustrating the results presented in Section \ref{sec:mainres}.

\subsection{Generalized Bratteli diagrams of bounded size}\label{subsec:bdd}

Generalized Bratteli diagrams of bounded size  
are used to 
model substitution dynamical systems on infinite alphabets 
(for more information on substitutions on infinite alphabets see, e.g., \cite{Ferenczi_2006}, \cite{DomingosFerencziMessaoudiValle2024}, \cite{Frettloh_Garber_Manibo_2022}, and for the corresponding Bratteli diagram construction see \cite{BezuglyiJorgensenSanadhya2024}). 
Such diagrams have a locally compact path space and can be considered as an intermediate step between 
the standard and generalized Bratteli diagrams. 



\begin{definition}\label{Def:BD_bdd_size} A generalized Bratteli 
diagram $B(F_n)$ is called of \textit{bounded size} if for all $n \geq 0$ the set $V_n$ is identified with $\mathbb{Z}$ and there exists 
a sequence of pairs of natural numbers $(t_n, L_n)_{n \in \N_0}$ 
such that, for all $n \in \mathbb{N}_0$ and all $v \in V_{n+1}$, we have
\begin{equation}\label{eq: Bndd size}
s(r^{-1}(v)) \in \{v - t_n, \ldots, v + t_n\} \quad \mbox{and} 
\quad \sum_{w \in V_{n}} f^{(n)}_{vw} = \sum_{w \in V_{n}} |E(w,v)| 
\leq L_n.
\end{equation} 
\end{definition} 

\begin{remark}
To simplify the notation and calculations, we assume that for each $n \in \N_0$, the natural numbers 
$(t_n, L_n)$ are chosen to be the minimal possible. Also, for every 
$n \in \mathbb{N}_0$, it is assumed that $E(v - t_n, v)$ and 
$E(v + t_n, v)$ are nonempty for all $v \in V_{n+1}$. Throughout this paper, we will not use the bound given by $L_n$ and will be using only the sequence of parameters $(t_n)$.
\end{remark}

\begin{remark}
    In this paper, we will only use the parameters $(t_n)$ for the generalized Bratteli diagrams of bounded size. In general, whether there is a single edge or multiple edges between a given pair of vertices doesn't affect irreducibility or connectedness of a Bratteli diagram, or whether the tail equivalence relation is topologically transitive or minimal. 
\end{remark}

Let $B$ be a generalized Brattlei diagram of bounded size. Since every vertex of $B$ has finitely many outgoing edges, the path space $X_B$ is locally compact. Lemma \ref{lemma_bdd_size_cone}  describes the structure of every generalized Bratteli diagram of bounded size. This result can be found in \cite[Section 3.1]{BezuglyiJorgensenKarpelSanadhya2025}.

\begin{lemma}\cite{BezuglyiJorgensenKarpelSanadhya2025}\label{lemma_bdd_size_cone} Let $B=(V,E)$ be a 
diagram of bounded size corresponding to a sequence 
$(t_n, L_n)_{n \in \N_0}$. Fix $v \in V_{n+1}$. Then 
for all infinite paths $x = 
(x_n)_{n\in \N_0}$ such that $r(x_n) = v$ and all $m > n$ we have
\begin{equation}\label{formula_bdd_size_lower_cone}
r(x_{m}) \in \left\{v - \sum_{i = n+1}^{m} t_{i}, \ldots, v + 
\sum_{i = n+1}^{m} t_{i}\right\},
\end{equation}
and for 
every $k \leq n$ we have 
$$
s(x_k) \subset \left\{v - \sum_{i = k}^{n} t_i, \ldots, v + 
\sum_{i = k}^n t_i\right\}.
$$
\end{lemma}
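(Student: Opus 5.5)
Both inclusions follow by induction on the level, using only the first condition in \eqref{eq: Bndd size}. That condition says that for every $u \in V_{k+1}$ and every edge $e \in E_k$ with $r(e) = u$ we have $|s(e) - u| \le t_k$. Equivalently, reading the same statement from the source side, for every edge $e \in E_k$ we have $|r(e) - s(e)| \le t_k$. So each edge of $E_k$ moves the integer label of a vertex by at most $t_k$, whichever direction we traverse it.

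For the lower cone \eqref{formula_bdd_size_lower_cone}, fix $v \in V_{n+1}$ and an infinite path $x = (x_i)$ with $r(x_n) = v$. We induct on $m > n$.
\begin{itemize}
\item \emph{Base case $m = n+1$.} Since $x$ is a path, $s(x_{n+1}) = r(x_n) = v$. The edge $x_{n+1} \in E_{n+1}$ then gives $|r(x_{n+1}) - v| \le t_{n+1}$.
\item \emph{Inductive step.} Assume $|r(x_{m-1}) - v| \le \sum_{i=n+1}^{m-1} t_i$. Then $s(x_m) = r(x_{m-1})$ and $|r(x_m) - s(x_m)| \le t_m$. The triangle inequality yields $|r(x_m) - v| \le \sum_{i=n+1}^{m} t_i$.
\end{itemize}

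For the upper part, we use downward induction on $k \le n$.
\begin{itemize}
\item \emph{Base case $k = n$.} The edge $x_n$ has range $v$, so $|s(x_n) - v| \le t_n$ directly from \eqref{eq: Bndd size}.
\item \emph{Inductive step.} Assume $|s(x_{k+1}) - v| \le \sum_{i=k+1}^{n} t_i$. Then $r(x_k) = s(x_{k+1})$ and $|s(x_k) - r(x_k)| \le t_k$. Adding these gives $|s(x_k) - v| \le \sum_{i=k}^{n} t_i$.
\end{itemize}
The statement writes $s(x_k) \subset \{\cdots\}$; we read it as membership $s(x_k) \in \{\cdots\}$.

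There is no real obstacle here; the only care needed is the index bookkeeping. Edges in $E_k$ go from $V_k$ to $V_{k+1}$ and are controlled by $t_k$. Hence the downward sums run from $n+1$ to $m$, while the upward sums run from $k$ to $n$. Infiniteness of $x$ is irrelevant: the same bounds hold for every finite path through $v$.
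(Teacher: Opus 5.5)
Your proof is correct. The bounded-size condition says exactly that every edge of $E_k$ changes the vertex label by at most $t_k$. Adding up these displacements along $x_{n+1},\dots,x_m$, and along $x_k,\dots,x_n$, gives both cones with the correct index ranges, and your reading of $\subset$ as $\in$ is the right fix. The paper itself gives no proof of this lemma; it only quotes it from earlier work, so there is no argument in the paper to compare against, and yours is the natural direct one from the definition.
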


Let $B$ be a generalized Bratteli diagram of bounded size, $n \in \N_0$ and $v \in V_{n+1}$. We call the set of vertices $\{s(E(V_m, v)): m \leq n\}$ the \textit{upper cone} corresponding to $v$. The set of vertices $\{r(E(V_m, v)): m > n+1\}$ is called the the \textit{lower cone} corresponding to $v$.

For $w \in V_0$, define 
$$
Z_w^+ = \left\{x = (x_n) \in X_B : s(x_0) \geq w \mbox{ and } 
r(x_n) \geq w + \sum_{i = 0}^{n} t_i \mbox{ for all } n \in 
\mathbb{N}_0\right\}.
$$
and
$$
Z_w^- = \left\{x = (x_n) \in X_B : s(x_0) \leq w \mbox{ and } 
r(x_n) \leq w - \sum_{i = 0}^{n} t_i \mbox{ for all } n \in 
\mathbb{N}_0\right\}.
$$
The sets $Z^+_w$, $Z^-_w$ are called \textit{slanting sets} for 
$w \in V_0$. 
The following result can be found in \cite[Lemma 5.3, Theorem 5.4]{BezuglyiJorgensenKarpelSanadhya2025}.



\begin{theorem}\cite{BezuglyiJorgensenKarpelKwiatkowski2025}
    Let $B = (V,E) $ be a generalized Bratteli diagram of bounded size, with the 
corresponding sequence $(t_n, L_n)_{n \in \N_0}$. 
Then, for every $w \in V_0$, the sets $Z^+_w$, $Z^-_w$ are closed 
nowhere dense $\mathcal{R}$-invariant sets. In particular,  the tail equivalence relation 
$\mathcal{R}$ is \textit{not minimal}.
\end{theorem}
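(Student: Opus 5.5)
I will prove the statement for $Z^+_w$; the case of $Z^-_w$ follows by the symmetric argument, reflecting $\Z$ via $i \mapsto -i$. Set $S_n = \sum_{i=0}^{n} t_i$. The plan is to check the three properties in turn: closedness, $\mathcal R$-invariance, and nowhere density. Non-minimality then follows at once. If $Z^+_w$ is nonempty, it is a proper closed invariant set, so the orbit of any of its points has closure contained in $Z^+_w \neq X_B$. If $Z^+_w$ is empty for every $w$ (which should not happen, see below), then minimality still fails via a complementary open invariant set.

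\emph{Closedness.} Each defining condition $s(x_0)\ge w$ or $r(x_n)\ge w+S_n$ depends only on the finitely many coordinates $x_0,\dots,x_n$. Hence each condition determines a union of cylinder sets, and its complement is also such a union, so each condition is clopen. The set $Z^+_w$ is a countable intersection of these clopen sets, hence closed.

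\emph{Invariance.} Let $x\in Z^+_w$ and let $y$ agree with $x$ from level $N$ on. For $n\ge N$ the conditions on $y$ are inherited from $x$. For $n<N$, apply Lemma~\ref{lemma_bdd_size_cone} backwards from $v=r(x_{N-1})=r(y_{N-1})\ge w+S_{N-1}$. It gives
\[
r(y_n)\ \ge\ v-\sum_{i=n+1}^{N-1}t_i\ \ge\ w+S_n ,
\qquad
s(y_0)\ \ge\ v-S_{N-1}\ \ge\ w .
\]
So $y\in Z^+_w$, and $Z^+_w$ is $\mathcal R$-invariant.

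\emph{Nowhere density.} This is the main step. Since $Z^+_w$ is closed, it suffices to show that every cylinder $[\ol e]$, with $\ol e=(e_0,\dots,e_n)$ and $v=r(e_n)$, contains a path outside $Z^+_w$. The plan is to extend $\ol e$ by a path that drifts slowly. By the standing assumption, $E(u-t_m,u)$ is nonempty for every $u\in V_{m+1}$, and every vertex has an outgoing edge because no column is zero. Choose successive edges so that at each step the range increases by strictly less than $t_m$ whenever possible, for instance by taking a vertical edge if one exists. Then $r(x_m)-S_m$ decreases along the path and eventually drops below $w$, which puts the path outside $Z^+_w$.

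The obstacle is that vertical edges need not exist. The fallback uses the edges from $u-t_m$ to $u$ guaranteed by minimality of $t_m$. From a current vertex $a\in V_m$, there is an edge to some range $r\le a+t_m$, and in general this only gives $r(x_m)-S_m$ non-increasing, not eventually below $w$. If the extremal edge $a\to a+t_m$ is the only option at every level, I would instead exploit the other extreme: choosing edges with $s=r+t_m$ makes the range drop by $t_m$. Combining the two extremes, I will pick at each level the edge whose range is minimal. This forces $r(x_m)\le a+t_m$, and strictly less at infinitely many levels unless the diagram is degenerate. Verifying that strictness is the delicate part. If it fails, I will extend to a path ending in $Z^-_{w'}$ for some far-left $w'$; such a path is certainly not in $Z^+_w$, since ranges there go to $-\infty$.

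Finally, the same minimal-range construction shows that $Z^+_w\neq\emptyset$. Hence $Z^+_w$ is a nonempty proper closed invariant set, and $\mathcal R$ is not minimal.
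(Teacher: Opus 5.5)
Your closedness and invariance arguments are correct. Note that the paper does not prove this theorem; it quotes it from the literature, so the question is only whether your argument is complete.

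\textbf{The gap is in the nowhere-density step.} You flag this step as ``the delicate part'' and leave it unresolved. The missing observation is the second half of the standing assumption in the Remark after Definition~\ref{Def:BD_bdd_size}: $E(v+t_m,v)\neq\emptyset$ for every $v\in V_{m+1}$. Applied to $v=a-t_m$, it says every vertex $a\in V_m$ has an edge to $a-t_m\in V_{m+1}$. Every edge leaving $a$ has range at least $a-t_m$, so the minimal-range extension of $\ol e$ goes to exactly $a-t_m$ at every level. Hence $r(x_m)-S_m$ drops by $2t_m\ge 2$ at each step and eventually falls below $w$. The symmetric argument for $Z^-_w$ uses $E(v-t_m,v)\neq\emptyset$.

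Without this observation your argument does not close:
\begin{itemize}
\item The edges from $u-t_m$ to $u$ that you invoke in the ``fallback'' raise the range by exactly $t_m$. They keep $r(x_m)-S_m$ constant, so they cannot help.
\item The ``degenerate'' case you want to handle by passing into some $Z^-_{w'}$ is exactly the case where every edge leaving the current vertex goes to $a+t_m$. From such a vertex no extension can move left, so that fallback is unavailable precisely when it is needed.
\end{itemize}
The hypothesis is genuinely essential. Take $f^{(n)}_{vw}=1$ for $v=w+1$ and $0$ otherwise. Then $t_n=1$, every path has $r(x_n)=s(x_0)+n+1$, and $Z^+_w=\{x: s(x_0)\ge w\}$ is a nonempty clopen set, so it is not nowhere dense.

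\textbf{Nonemptiness.} $Z^+_w\neq\emptyset$ comes from the \emph{maximal}-range path $w,\ w+t_0,\ w+t_0+t_1,\dots$, which exists because $E(v-t_m,v)\neq\emptyset$. It does not come from ``the same minimal-range construction'', since that produces paths which leave $Z^+_w$. With nonemptiness established this way, $Z^+_w$ is a nonempty proper closed invariant set, which gives non-minimality. Your disjunct about every $Z^+_w$ being empty is then unnecessary, and as written it is not meaningful: the complement of the empty set is all of $X_B$.
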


The tail equivalence relation for the Bratteli digrams of bounded size can be topologically transitive or not depending on the diagram. The following result was proved in \cite[Theorem 5.1]{BezuglyiJorgensenKarpelSanadhya2025}:

\begin{theorem}\label{ThmBJKStoptrans}\cite{BezuglyiJorgensenKarpelSanadhya2025}
 Let $B = (V,  E)$ be a generalized stationary Bratteli diagram 
with an 
irreducible aperiodic incidence matrix $F = (f_{ij})_{i,j \in 
\mathbb{Z}}$. Then the tail equivalence relation $\mathcal{R}$ is 
topologically transitive.   
\end{theorem}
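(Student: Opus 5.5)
The goal is to construct a single path $x \in X_B$ whose tail-equivalence class meets every cylinder set. The construction goes through finite path segments, patched together level by level.

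\emph{Step 1: reduce to target cylinders.} The set of finite paths starting in $V_0$ is countable, so the cylinders $[\ol e_1], [\ol e_2], \ldots$ can be enumerated. Each is nonempty, since every vertex has an outgoing edge (no column of $F$ is zero). It therefore suffices to find one infinite path $x$ such that, for every $k$, some $y \in \mathcal O(x)$ lies in $[\ol e_k]$.

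\emph{Step 2: paths between arbitrary vertices at arbitrary sufficiently large distances.} Since $F$ is irreducible and aperiodic, the standard theory of countable non-negative matrices (e.g.\ \cite{Kitchens1998}) gives, for each pair $i,j$, an $N(i,j)$ with $f^{(m)}_{ji} > 0$ for all $m \ge N(i,j)$. Here $f^{(m)}$ denotes the entries of $F^m$. The argument is the usual one: $\{m : f^{(m)}_{ii}>0\}$ is closed under addition and has gcd $1$, so it contains all large $m$, and irreducibility then connects $i$ to $j$. Because $B$ is stationary, this means that from $i \in V_n$ one reaches $j \in V_{n+m}$ for every $m \ge N(i,j)$, independently of $n$.

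\emph{Step 3: inductive construction.} I build $x$ as a concatenation of finite segments. Start with a segment $\ol p_1$ from $V_0$ ending at some vertex $u_1 \in V_{n_1}$.

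At stage $k$, the current segment ends at $u_k \in V_{n_k}$. Let $\ol e_k$ end at $v_k \in V_{\ell_k}$. Choose a level $n_{k+1} > \max(n_k,\ell_k) + N(v_k,a) + N(u_k,a)$ together with a fixed vertex $a$. By Step 2, there is a path from $u_k \in V_{n_k}$ to $a \in V_{n_{k+1}}$, and also a path from $v_k \in V_{\ell_k}$ to the same vertex $a \in V_{n_{k+1}}$; both exist because the level gaps exceed the corresponding $N$'s. Extend $x$ by the first path, so $u_{k+1} = a$.

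Then the path $y$ defined as $\ol e_k$, followed by the path from $v_k$ to $a$, followed by the tail of $x$ below level $n_{k+1}$, lies in $[\ol e_k]$. It agrees with $x$ from level $n_{k+1}$ on, so $y \in \mathcal O(x)$. The limit $x$ is an infinite path in $X_B$, and its orbit meets every $[\ol e_k]$, so it is dense. This proves topological transitivity.

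\emph{Main obstacle.} The only delicate point is Step 2: making both paths land on the \emph{same} vertex at the \emph{same} level. This needs aperiodicity, which gives positivity of $F^m$ for all large $m$, rather than just irreducibility. Note that $N(i,j)$ is not uniform in $i,j$ for an infinite matrix, but uniformity is not needed. At each stage only the two finitely specified vertices $u_k$ and $v_k$ are involved, so choosing $n_{k+1}$ large enough depending on them suffices.
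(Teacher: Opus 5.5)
Your proof is correct. Eventual positivity of $(F^m)_{ji}$ for each fixed pair is exactly what lets the current endpoint $u_k$ of $x$ and the endpoint $v_k$ of the target cylinder reach a common vertex $a$ on a common level. The diagonal concatenation then gives a path whose orbit meets every cylinder.

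The paper gives no proof of this statement; it quotes it from \cite[Theorem 5.1]{BezuglyiJorgensenKarpelSanadhya2025}, so there is no in-paper argument to match against.

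The closest argument in the paper is the remark, taken from \cite[Remark 5.2]{BezuglyiJorgensenKarpelSanadhya2025}, that an irreducible diagram with an infinite vertical path has a dense orbit. In the stationary setting such a path exists only if some $f_{ii}>0$, and that already forces period one. So your construction covers strictly more, for example aperiodic matrices with zero diagonal such as $f_{ij}=1$ for $|i-j|\in\{1,2\}$.

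Your argument actually uses only one property: any two vertices have a common descendant on a common level. This property is also necessary for topological transitivity, since every vertex is reachable from $V_0$. Example~\ref{ex:bdd_irred_nontrans} shows exactly how it fails, and transitivity with it, when $F$ is irreducible but has period $2$. That confirms your remark that aperiodicity is the essential extra ingredient.
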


It was also noticed in \cite[Remark 5.2]{BezuglyiJorgensenKarpelSanadhya2025} that if $B(V, E)$ is an irreducible generalized Bratteli diagram and
there exists an infinite vertical path $x \in X_B$ 
then the orbit $[x]_{\mathcal R}$ is dense in $X_B$, and 
the tail equivalence relation $\mathcal{R}$ is topologically 
transitive. Thus, we obtain the following corollary.

\begin{corol}
Let $B$ be a generalized Bratteli diagram of bounded size and $\mathcal{R}$ be the corresponding tail equivalence relation.

\begin{enumerate}[label=\upshape(\roman*), leftmargin=*, widest=iii]

\item If $B = B(F)$ is stationary and $F$ is irreducible then $\mathcal{R}$ is topologically transitive. In particular, if for some $t \in \N$ the incidence matrix $F$ has the property $f_{ij} > 0$ for $|i-j| \leq t$ then $F$ is irreducible and $\mathcal{R}$ is topologically transitive.

\item Let $B = B(F_n)$ have parameters $(t_n, L_n)$ such that
$f_{ij}^{(n)} > 0$ for all $|i - j| \leq t_n$. Then $B$ is irreducible, has vertical paths, and $\mathcal{R}$ is topologically transitive.
\end{enumerate}
\end{corol}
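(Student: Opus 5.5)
Part (i) follows from Theorem \ref{ThmBJKStoptrans} together with the remark from \cite[Remark 5.2]{BezuglyiJorgensenKarpelSanadhya2025}: if $B$ is irreducible and has an infinite vertical path, then that path has a dense orbit.

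\emph{Part (i).} For a stationary $B(F)$ with $F$ irreducible, I would split into two cases.
\begin{enumerate}[label=\upshape(\alph*), leftmargin=*]
\item If some diagonal entry $f_{ii}$ is positive, there is an infinite vertical path through $i$. Since irreducibility of $F$ is exactly irreducibility of the stationary diagram, the remark gives topological transitivity directly.
\item If $F$ is aperiodic, Theorem \ref{ThmBJKStoptrans} applies.
\end{enumerate}
In full generality, with $F$ irreducible but possibly periodic and with zero diagonal, I would reprove the remark's argument with a "periodic vertical" path in place of a vertical one. Fix $i$ and $p$ with $(F^p)_{ii}>0$. Build a path $x$ that returns to vertex $i$ at levels $0,p,2p,\dots$. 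Then $x$ has infinitely many levels $m$ at which it passes through $i$.

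To show $\mathcal{O}(x)$ is dense, take a cylinder $[\ol e]$ with $r(\ol e)=j\in V_n$. Irreducibility of the diagram gives some $m>n$ and a finite path from $j\in V_n$ to $i\in V_m$. Here I need $m$ to be compatible with a time at which $x$ visits $i$; this is the main obstacle. I would handle it by first going from $i\in V_m$ back to $i$ at a level $kp$ using loops. This requires a loop length at $i$ that combines with $p$ to reach every residue, which may fail when the period is nontrivial. In that case I would instead replace the single path $x$ by a path visiting every residue class, that is, a path that cycles through loops of all lengths $\ell$ with $(F^\ell)_{ii}>0$, so that the set of its return levels has gcd equal to the period. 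I would then argue density only up to this parity obstruction. If this fails, I would restrict to the aperiodic case and cite Theorem \ref{ThmBJKStoptrans}.

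For the "in particular" claim, assume $f_{ij}>0$ whenever $|i-j|\le t$. Then $f_{ii}>0$, so vertical paths exist and $F$ is aperiodic. Irreducibility follows by walking in steps of $\pm1$, since $f_{i,i\pm1}>0$, so any two indices are connected. Case (a) above then applies.

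\emph{Part (ii).} Since $f^{(n)}_{ii}>0$ for all $n$, the diagram has vertical paths. For irreducibility, fix $i,j\in\Z$ and $n$, and set $m=n+|i-j|$. At each level step by $\pm1$ toward $j$, which is allowed because $t_n\ge1$ and $f^{(n)}_{k,k\pm1}>0$. This gives a path from $i\in V_n$ to $j\in V_m$. The remark from \cite{BezuglyiJorgensenKarpelSanadhya2025} then gives topological transitivity.
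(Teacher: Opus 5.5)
Your arguments for part (ii), for the ``in particular'' clause of (i), and for (i) when $F$ is aperiodic are correct. They are exactly how the paper obtains the corollary: it is read off from Theorem~\ref{ThmBJKStoptrans} together with the observation that irreducibility plus an infinite vertical path yields a dense orbit. There is one small fix in (ii): when $i=j$, your choice $m=n+|i-j|$ gives $m=n$, so take a vertical edge first to get $m>n$. Also, your case (a) is contained in case (b), since $f_{ii}>0$ forces period $1$.

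The genuine gap is the remaining case of (i), where $F$ is irreducible with period $d\ge 2$, and it cannot be closed: there the first sentence of (i) is false. The aperiodicity hypothesis of Theorem~\ref{ThmBJKStoptrans} has been dropped from the statement. The paper's own Example~\ref{ex:bdd_irred_nontrans} ($f_{ij}=1$ iff $|i-j|=1$) is a stationary diagram of bounded size with irreducible $F$ and non-transitive $\mathcal{R}$.

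In general, the index set splits into nonempty cyclic classes $C_0,\dots,C_{d-1}$ such that $f_{vw}>0$ and $w\in C_k$ force $v\in C_{(k+1)\bmod d}$. Hence every path $x$ has a residue $\rho(x)$ with $s(x_n)\in C_{(\rho(x)+n)\bmod d}$ for all $n$.
\begin{itemize}
\item The residue $\rho$ is a tail invariant, since tail-equivalent paths eventually pass through the same vertices.
\item Each set $\{x:\rho(x)=c\}$ is a nonempty union of cylinders of length one, hence clopen.
\end{itemize}
So $X_B$ is a disjoint union of $d\ge 2$ nonempty clopen $\mathcal{R}$-invariant sets, and no orbit is dense.

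This is precisely why your ``periodic vertical'' path breaks down, and why cycling through loops of all lengths cannot rescue it. Any path visits a fixed vertex $i$ only at levels in one residue class mod $d$, and its orbit never meets a cylinder whose initial vertex lies outside $C_{\rho(x)}$. ``Arguing density up to the parity obstruction'' therefore proves nothing. The correct conclusion is that (i) needs $F$ irreducible and aperiodic; with that hypothesis, your case (b) is the entire proof of (i).
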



\begin{example}[irreducible stationary diagram of bounded size with transitive tail equivalence relation]
The two-sided generalized Bratteli diagram $B(F)$ mentioned in Example \ref{Ex:isom_bd} and Remark \ref{Rem:isomvsirred} (see the upper diagram on Figure \ref{Fig:Isom_BD}) is a stationary irreducible generalized Bratteli diagram of bounded size with topologically transitive non-minimal tail equivalence relation. 
\end{example}

\begin{example}[irreducible stationary diagram of bounded size with non-transitive tail equivalence relation]\label{ex:bdd_irred_nontrans}
Let $B$ be a stationary generalized Bratteli diagram of bounded size with the incidence matrix $F = (f_{ij})$ defined as follows:
$$
f_{ij} = 
\left\{
\begin{aligned}
& 1, \mbox{ for } |i - j| = 1,\\
& 0, \mbox{ otherwise. }
\end{aligned}
\right.
$$
Let $\mathcal{R}$ be the tail equivalence relation on $B$.
Then $B$ is irreducible but $\mathcal{R}$ is not topologically transitive. Note that from even vertices on even levels of $B$ we can reach only odd vertices on odd levels of $B$, and vice versa, from odd vertices on even levels, we can reach only even vertices on odd levels. Thus, the diagram $B$ can be represented as a union of two disjoint subdiagrams $\ov B_{1}$ and $\ov B_2$, where the vertices of $\ov B_{1}$ are all even vertices on even levels and all odd vertices on odd levels, and the vertices of $\ov B_2$ are all odd vertices on even levels and all even vertices on odd levels. We have $X_B = X_{\ov B_{1}} \bigsqcup X_{\ov B_2}$, and the path spaces $X_{\ov B_{1}}$ and $X_{\ov B_2}$ are  $\mathcal{R}$-invariant. Since each $X_{\ov B_i}$, $i = 1,2$ is a countable union of cylinder sets of level $0$, both sets $X_{\ov B_{1}}$ and $X_{\ov B_2}$ are clopen.
Hence, the tail equivalence relation on $B$ is not topologically transitive. It is easy to see that the diagram is irreducible: from every vertex $i \in V_0$ (since the diagram $B$ is stationary, it's enough to consider only level $V_0$), we will eventually reach arbitrary vertex $j$ on some level below. The above phenomenon takes place because the matrix $F$ is not aperiodic: it has period $2$ (see e.g. \cite[Chapter 7]{Kitchens1998} or \cite[Appendix A]{BezuglyiJorgensenKarpelSanadhya2025}).
\end{example}

\begin{example}[reducible stationary diagram of bounded size with non-transitive tail equivalence relation]\label{ex:bdd_red_nontrans} We can modify Example \ref{ex:bdd_irred_nontrans} to obtain a reducible generalized Bratteli diagram $\tl B$ of bounded size with non-transitive tail equivalence relation. It suffices to choose $f_{ij} = 1$ for $|i-j| = 2$ and $f_{ij} = 0$ otherwise. Then from even vertices we can only reach even vertices, and from odd vertices we can only reach odd vertices. Thus, such diagram is reducible. The tail equivalence relation is non-transitive for the similar reason as in Example \ref{ex:bdd_irred_nontrans}: the diagram $\tl B$ can be represented as a disjoint union of four subdiagrams.    
\end{example}

\subsection{The renewal shift}\label{subsec:BD_renewal}

Consider the one-sided stationary generalized diagram $B_{RS} = B_{RS}(F_{RS})$, 
where the incidence matrix $F_{RS}$ is defined as follows:
\begin{equation}
    F_{RS} = \begin{pmatrix}
    1 & 1 & 0 & 0 & 0 & 0 & \cdots\\
    1 & 0 & 1 & 0 & 0 & 0 & \cdots\\
    1 & 0 & 0 & 1 & 0 & 0 & \cdots\\
    1 & 0 & 0 & 0 & 1 & 0 & \cdots\\
    1 & 0 & 0 & 0 & 0 & 1 & \cdots\\
    1 & 0 & 0 & 0 & 0 & 0 & \cdots\\
   \vdots & \vdots & \vdots & \vdots & \vdots & \vdots & \ddots
    \end{pmatrix}.
\end{equation}

\begin{figure}[hbt!]
\unitlength=1cm
\begin{graph}(9,4)
  %
   \roundnode{V21}(0,3)
 \roundnode{V22}(2,3)
 \roundnode{V23}(4,3)
 \roundnode{V24}(6,3)
  \roundnode{V25}(8,3)
 \roundnode{V31}(0,1)
 \roundnode{V32}(2,1)
 \roundnode{V33}(4,1)
  \roundnode{V34}(6,1)
    \roundnode{V35}(8,1)
  
 %
 \graphlinewidth{0.025}


    \edge{V31}{V21}
        \edge{V31}{V22}
      
      \edge{V32}{V21} 
   \edge{V32}{V23}
     
 \edge{V33}{V21}
  \edge{V33}{V24}

   \edge{V34}{V21} 
    \edge{V34}{V25}

   \edge{V35}{V21}

 \freetext(8.9,3){$\ldots$}
  \freetext(8.9,1){$\ldots$}
  
  \freetext(0,0.5){$\vdots$}
  \freetext(2,0.5){$\vdots$}
    \freetext(4,0.5){$\vdots$}
      \freetext(6,0.5){$\vdots$}  
        \freetext(8,0.5){$\vdots$}

\end{graph}
\caption{One-sided stationary generalized Bratteli diagram $B_{RS}$ which corresponds to the renewal shift.}\label{Fig:BDrenewal}
\end{figure}

In other words, the vertex $1$ of each level is connected by a single edge to all vertices on the level below, and for $n > 1$, the vertex $n$ is connected by a singe edge to the vertex $(n-1)$ on the level below (see Figure \ref{Fig:BDrenewal}). 
The matrix $A_{RS} = F_{RS}^T$ appears in the theory of countable Markov shifts and corresponds to the renewal shift (see \cite[Chapter 7]{Kitchens1998} and \cite[Subsection 4.3.1]{Raszeja2021}, \cite[Subsection 3.2]{BEFR2022}). 
The diagram $B_{RS}$ was considered in \cite[Example 7.16]{BezuglyiJorgensenKarpelSanadhya2025}.
This diagram possesses a unique ergodic invariant probability measure which takes positive values on all cylinder sets (see \cite[Theorem 7.2, Proposition 7.17]{BezuglyiJorgensenKarpelSanadhya2025}). One can also find modifications of the diagram $B_{RS}$ and the corresponding incidence matrix, e.g., in \cite[Subection 7.3]{BezuglyiJorgensenKarpelSanadhya2025} and \cite[Subection 4.3]{Raszeja2021}.

Theorem \ref{thm:minimalRrenewalshift} below shows that the diagram $B_{RS}$ provides an example of an irreducible generalized Bratteli diagram with a minimal tail equivalence relation. 

\begin{theorem}\label{thm:minimalRrenewalshift}
    Let $\mathcal{R}_{RS}$ be the tail equivalence relation on the generalized Bratteli diagram $B_{RS}$ which corresponds to the renewal shift. Then $\mathcal{R}_{RS}$ is minimal and $B_{RS}$ is irreducible.
\end{theorem}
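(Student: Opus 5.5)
The plan is to read off the combinatorial structure of $B_{RS}$ from $F_{RS}$ and then do both parts by hand. Recall that $f_{vw}$ counts edges from $w\in V_n$ to $v\in V_{n+1}$. Row $i$ of $F_{RS}$ has nonzero entries exactly at columns $1$ and $i+1$. So the vertex $i\in V_{n+1}$ receives one edge from $1\in V_n$ and one edge from $i+1\in V_n$. Read in the forward direction, this gives two facts:
\begin{enumerate}[label=\upshape(\alph*), leftmargin=*]
\item from $1\in V_n$ there is exactly one edge to every vertex of $V_{n+1}$, including a vertical loop $1\to 1$;
\item from $k\in V_n$ with $k>1$ there is exactly one outgoing edge, and it goes to $k-1\in V_{n+1}$.
\end{enumerate}

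\textbf{Irreducibility.} Fix $i,j$ and $n$. By (b), starting from $i\in V_n$ there is a forced path $i\to i-1\to\cdots\to 1$ that reaches $1\in V_{n+i-1}$. By (a), we can then use the loop at $1$ any number of times, and finally take one edge $1\to j$. This gives a finite path from $i\in V_n$ to $j\in V_m$ for every $m\ge n+i>n$, so $B_{RS}$ is irreducible.

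\textbf{Minimality.} The key step, and the only real point, is a recurrence property: every infinite path $x\in X_B$ passes through the vertex $1$ at infinitely many levels. Suppose $r(x_m)=k$. By (b), the next $k-1$ edges of $x$ are forced down to $1$. Once $x$ is at $1$ it moves to some vertex $j$, and from there it returns to $1$ within $j-1$ further steps. Hence the set of levels at which $x$ sits at $1$ is unbounded.

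Now fix $x\in X_B$ and a cylinder $[\ol e]$, where $\ol e=(e_0,\dots,e_N)$ ends at $v=r(e_N)\in V_{N+1}$. Since cylinders form a base of the topology, it suffices to find $y\in[\ol e]\cap\mathcal O(x)$. By recurrence, choose a level $M\ge N+v$ at which $x$ sits at the vertex $1$. Build $y$ in four pieces:
\begin{enumerate}[label=\upshape(\roman*), leftmargin=*]
\item follow $\ol e$ down to $v\in V_{N+1}$;
\item take the forced path $v\to v-1\to\cdots\to 1$, which arrives at $1\in V_{N+v}$;
\item use the loop at $1$ repeatedly until arriving at $1\in V_M$;
\item from level $M$ on, copy the edges of $x$.
\end{enumerate}
Then $y\in[\ol e]$, and $y$ agrees with $x$ from level $M$ onward, so $(x,y)\in\mathcal R_{RS}$. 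Therefore $\mathcal O(x)$ is dense for every $x$, and $\mathcal R_{RS}$ is minimal.
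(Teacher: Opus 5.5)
Your proof is correct and follows the same route as the paper: irreducibility comes from the forced descent $i\to i-1\to\cdots\to 1$ followed by the fan-out from vertex $1$, and minimality comes from every path returning to vertex $1$ infinitely often, so any cylinder can be spliced onto a tail of $x$ through the forced descent and the vertical loop at $1$. You also justify the recurrence to vertex $1$ explicitly, which the paper only asserts from the structure of the diagram.
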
 

\begin{proof} First, we show that $B_{RS}$ is irreducible.
From the vertex $i = 1$ on any level of the diagram $B_{RS}$, there are edges to any vertex $j \in \N$ on a level below. 
For every $n \in \N_0$ and any vertex $i > 1$ in $V_n$, there is a finite path from $i$ to the first vertex on the level $n + i - 1$. Hence, from the vertex $i$ on the level $n$, we can reach any vertex $j$ on the level $n + i$. Thus, the diagram $B_{RS}$ is irreducible.

Now we show that $\mathcal{R}_{RS}$ is minimal. Due to the structure of $B_{RS}$ (see Figure \ref{Fig:BDrenewal}), any path from $X_{B_{RS}}$ passes through the first vertex infinitely many times. Let $x \in X_{B_{RS}}$ be an arbitrary path and $[(y_0, \ldots, y_n)]$ be an arbitrary cylinder set. Then $r(y_n) \in V_{n+1}$ is joined by a finite path to the first vertex on some level $m > n+1$. 
Since $x$ passes through the first vertex infinitely many times, there is a level $m_1 > m$ such that $s(x_{m_1}) = 1$. There is a vertical path in $B$ between the first vertex on level $m$ and the first vertex on level $m_1$, hence, there is an infinite path $z = (z_i)$ such that $z_i = x_i$ for $i \geq m_1$, $s(z_i) = 1$ for $m \leq i \leq m_1$ and $z_i = y_i$ for $0 \leq i \leq n$. Thus, $z$ is tail equivalent to $x$ and belongs to the set $[(y_0, \ldots, y_n)]$. In other words, the orbit of $x$ under the tail equivalence relation visits $[(y_0, \ldots, y_n)]$. Thus, the orbit
of every infinite path under the tail equivalence relation visits every cylinder set, thus the tail equivalence relation $\mathcal{R}_{RS}$ is minimal.  
\end{proof}

\subsection{Diagrams of infinite odometers}\label{subsec:DIO}
A class of generalized Bratteli diagrams called \textit{diagrams of infinite odometers} ($B_{IO}$) was introduced in \cite[Section 4]{BezuglyiKarpelKwiatkowski2024}. It's incidence matrices are $\N \times \N$ upper triangular matrices
$$
{F}_{n} =
\begin{pmatrix}
 a_{1}^{(n)} &     1 &      0 &      0 & \ldots  \\
     0 &  a_{2}^{(n)} &      1 &      0 &      \ldots  \\
     0 &      0 &  a_{3}^{(n)} &      1 &      \ldots  \\
     0 &      0 &      0 &  a_{4}^{(n)} &      \ldots  \\
\vdots & \vdots & \vdots & \vdots & \ddots \\
\end{pmatrix}, \quad n \in \N_0,
$$
with entries
$$
f_{ij}^{(n)} = 
\left\{
\begin{aligned}
& a_{i}^{(n)} \geq 2, \mbox{ for } j = i,\\
& 1, \mbox{ for } j = i + 1,\\
& 0, \mbox{ otherwise. }
\end{aligned}
\right.
$$

By \textit{odometer} we will mean a standard Bratteli diagram which has a single vertex on each level and at least two edges connecting the vertices of neighbouring levels (see \cite[Subsection 6.5.1]{Durand2010}). For $i \in \N$, denote by $B_i$ the vertical odometer which passes through the vertex $i$ on each level. In other words, odometer $B_i$ has $a_n^{(i)}$ edges connecting the vertex of level $n$ with the vertex of level $n + 1$. Denote by $X_{B_i}$ the set of all paths from $X_B$ that pass through vertex $i$ on each level. Denote by $\wh {X}_{B_i}$ the set of all paths from $X_B$ that are tail equivalent to a path from $X_{B_i}$. Then $X_{B_{IO}} = \bigsqcup_{i = 1}^{\infty}\wh X_{\ov B_i}$ (see \cite[Proposition 4.1]{BezuglyiKarpelKwiatkowski2024}). In other words, every infinite path is eventually vertical and is tail equivalent to a path from $B_i$ for some $i \in \N$.

For diagram $B_{IO}$, one can characterize the set of all ergodic invariant measures (see \cite[Theorem 4.3]{BezuglyiKarpelKwiatkowski2024}). In particular, diagrams of infinite odometers provide examples of generalized Bratteli diagrams which possess no finite ergodic invariant measures. For instance, this happens for $a_{i}^{(n)} = i + 1$ for all $n \in \N_0$, see Figure \ref{Fig:no measure}.

\begin{figure}[hbt!]
\unitlength=.8cm
\begin{graph}(7,6)
 \roundnode{V11}(2,5)
 \roundnode{V12}(4,5)
 \roundnode{V13}(6,5) 
 \roundnode{V21}(2,3)
 \roundnode{V22}(4,3)
 \roundnode{V23}(6,3)
 \roundnode{V31}(2,1)
 \roundnode{V32}(4,1)
 \roundnode{V33}(6,1)
  %
 %
 \graphlinewidth{0.025}

 \bow{V21}{V11}{0.09}
  \bow{V21}{V11}{-0.09}
    \edge{V21}{V12}
     
 \bow{V22}{V12}{0.09}
 \bow{V22}{V12}{-0.09}
 \edge{V22}{V12}
 
 \bow{V23}{V13}{0.12}
  \bow{V23}{V13}{-0.12}
   \bow{V23}{V13}{0.04}
  \bow{V23}{V13}{-0.04}
 \edge{V22}{V13}

 \bow{V31}{V21}{0.09}
 \bow{V31}{V21}{-0.09}
   \edge{V31}{V22}
 \bow{V32}{V22}{0.09}
 \bow{V32}{V22}{-0.09}
 \edge{V32}{V22}

  \bow{V33}{V23}{0.12}
  \bow{V33}{V23}{-0.12}
   \bow{V33}{V23}{0.04}
  \bow{V33}{V23}{-0.04}
 \edge{V32}{V23}
 
    \freetext(6.9,5){$\ldots$}
  \freetext(6.9,3){$\ldots$}
    \freetext(6.9,1){$\ldots$}
    \freetext(2,0.1){$\vdots$}
  \freetext(4,0.1){$\vdots$}
    \freetext(6,0.1){$\vdots$}
\end{graph}
\caption{A stationary one-sided $B_{IO}$ that does not have finite invariant measures.}\label{Fig:no measure}
\end{figure}

In this paper, we will call diagrams $B_{IO}$ \textit{one-sided diagrams of infinite odometers}. We also introduce \textit{two-sided Bratteli diagrams of infinite odometers} $\tl B_{IO}$ with $\Z \times \Z$ incidence matrices $\tl F_n$ that have entries $a_i^{(n)} \geq 2$ on the main diagonal and ones on the diagonal above:
    $$
    \tl F_n = 
    \begin{pmatrix}
       \ddots & \vdots & \vdots & \vdots & \vdots & \udots\\
       \ldots & \textbf{$a_{-1}^{(n)}$} & 1 & 0 & 0 &\ldots\\
       \ldots & {0} & \textbf{$a_0^{(n)}$} & 1 & 0 & \ldots\\   
       \ldots & {0} & 0 & \textbf{$a_1^{(n)}$} & 1 & \ldots\\ 
       \ldots & 0 & 0 & 0 & \textbf{$a_{2}^{(n)}$} & \ldots\\     
       \udots & \vdots & \vdots & \vdots & \vdots & \ddots
    \end{pmatrix}.
    $$


The structure of the two-sided diagram $\tl B_{IO}$ is more complicated than the one of $B_{IO}$ since not every path in $X_{\tl B_{IO}}$ is eventually vertical. Since $\tl F_n$ has only values $1$ right above the main diagonal, the set of slanted paths is countable (see also \cite[Proposition 5.5]{BezuglyiJorgensenKarpelSanadhya2025}).

\begin{prop}
Both one-sided and two-sided Bratteli diagrams of infinite odometers $B_{IO}$ and $\tl B_{IO}$ are reducible, their corresponding tail equivalence relations $\mathcal{R}_{IO}$ and $\widetilde{\mathcal{R}}_{IO}$ are topologically transitive but not minimal. 
\end{prop}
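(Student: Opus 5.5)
The plan is to first record the combinatorics of the incidence matrices and then read off all three properties from them. By the convention $F_n = (f^{(n)}_{vw} : v \in V_{n+1}, w \in V_n)$, the only nonzero entries of $F_n$ (and of $\tl F_n$) are $f^{(n)}_{ii} = a_i^{(n)}$ and $f^{(n)}_{i,i+1} = 1$. So every vertex $w \in V_n$ has edges only to $w \in V_{n+1}$ (vertical, with multiplicity $a^{(n)}_w$) and to $w-1 \in V_{n+1}$ (a single slanted edge). In the one-sided case the vertex $1$ has only vertical outgoing edges. By induction, the set of vertices of $V_m$ reachable from $v \in V_n$ is exactly $\{v-(m-n), \ldots, v\}$, intersected with $\N$ in the one-sided case. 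This single observation drives everything.

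\emph{Reducibility and non-minimality.} From $1 \in V_n$ (one-sided), resp.\ from $0 \in V_n$ (two-sided), no vertex $j > 1$, resp.\ $j > 0$, is ever reachable, so both diagrams are reducible by Definition~\ref{Def:irreducible_GBD}. For non-minimality, I will take $x \in X_{B_2}$ in the one-sided case, resp.\ a vertical path through $0$ in $\tl B_{IO}$. Every $y \in \mathcal O(x)$ eventually coincides with $x$, so it eventually sits at vertex $2$ (resp.\ $0$). Since indices never increase along a path, $y$ never passes through vertex $1$ (resp.\ through any vertex $<0$). Hence $\mathcal O(x)$ misses the nonempty cylinder of any edge from $1 \in V_0$ to $1 \in V_1$ (resp.\ from $-1 \in V_0$), and the orbit is not dense.

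\emph{Transitivity, one-sided case.} Take any $x \in X_{B_1}$, i.e.\ a path vertical through $1$. Given a cylinder $[(y_0,\ldots,y_n)]$ with $r(y_n) = v$, there is a path from $v$ down to $1 \in V_{n+v}$. Concatenating $y_0,\ldots,y_n$, this connecting path, and the tail of $x$ from level $n+v$ on gives a path in the cylinder that is tail equivalent to $x$. So $\mathcal O(x)$ is dense.

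\emph{Transitivity, two-sided case (the main obstacle).} No vertical path works, and a fully slanted path fails too: it moves left at maximal speed, so a cone starting far to the right can never catch it. I will instead build a path $x$ whose positions $p(m) = s(x_m)$ satisfy $p(m+1) - p(m) \in \{0,-1\}$, $p(m) \to -\infty$, and $m + p(m) \to +\infty$. For example, let $p$ stay at $-k$ for $k+1$ consecutive levels before stepping to $-k-1$; then $p(m) \approx -\sqrt{2m}$. Given a cylinder ending at $v \in V_{n+1}$, the reachable set at level $m$ is $[v-(m-n-1), v]$. For $m$ large, $p(m) \le v$ because $p \to -\infty$, and $p(m) \ge v - m + n + 1$ because $m + p(m) \to \infty$. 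Choosing such an $m$, I join $v$ to $p(m) \in V_m$ and then follow $x$, which produces a point of $\mathcal O(x)$ in the cylinder. The only care needed is to verify the two asymptotic conditions for the explicit choice of $p$, which is routine.
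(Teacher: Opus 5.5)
Your proof is correct and follows essentially the same route as the paper. Reducibility comes from the fact that indices never increase along a path, and the orbits of vertical paths are not dense. In $B_{IO}$ the orbit of a path vertical through $1$ is dense. For $\tl B_{IO}$ a dense orbit comes from a path with infinitely many vertical and infinitely many slanted edges; the paper uses the alternating path through $0,0,-1,-1,\ldots$, which satisfies your two conditions $p(m)\to-\infty$ and $m+p(m)\to+\infty$ just as well. Your interval description $[v-(m-n-1),v]$ of the reachable set is simply a cleaner phrasing of the paper's case split into target vertices lying to the left or to the right of $x$.
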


\begin{proof}
For both diagrams $B_{IO}$ and $\tl B_{IO}$, from a vertex $i$ on level $n$ we will never reach vertex $j > i$ on any level below. Hence, both diagrams $B_{IO}$ and $\tl B_{IO}$ are reducible.

For the one-sided diagram $B_{IO}$, 
the set $\wh{X}_{B_{RS}}$ is dense in $X_{B_{IO}}$. The orbit a path $x \in X_{B_{IO}}$ is dense in $X_{B_{IO}}$ if and only if it $x$ is tail equivalent to a path from $X_{B_{RS}}$. Thus, the tail equivalence relation $\mathcal{R}_{IO}$ is topologically transitive but not minimal.


For the two-sided diagram of infinite odometers $\tl B_{IO}$, there is no $i \in \Z$ such that $\wh{X}_{B_i}$ is dense in $X_{\tl B_{IO}}$. The tail equivalence class of any eventually vertical path is not dense, but we can find other paths that have dense orbits under the tail equivalence relation on the path space of $\tl B_{IO}$. 
First, note that any path $x \in X_{\tl B_{IO}}$ divides the set of all vertices of $\tl B_{IO}$ into three disjoint nonempty subsets: the set $V(x) = \{s(x_n)\}_{n = 0}^{\infty}$ of vertices through which $x$ passes, the set 
$$V_{l}(x) = \bigcup_{n \geq 0}\{w \in V_n : w < s(x_n)\}$$ 
and the set 
$$V_{r}(x) = \bigcup_{n \geq 0}\{w \in V_n : w > s(x_n)\}.$$ 

Clearly, a path with a dense orbit cannot be vertical. Indeed, the paths from the tail equivalence class of a vertical path passing through the vertex $i$ on each level can reach only vertices $j \geq i$ on the levels above, i.e. the vertices from $V_r(x)$. Thus, they will never belong to a cylinder set which ends in a vertex from $V_l(x)$. In particular, they will never visit a cylinder set which corresponds to the vertex $i - 1$ on level $0$. 
     The slanted paths which start at some vertex $i \in V_0 = \Z$ and go all the time through the slanted edges also don't have dense orbits. Indeed, orbit of such a path will never reach a cylinder set which corresponds to the vertex $i+1$ on level $0$ or, in general, any other vertex from $V_r(x)$.
     
     The path $x$ which starts at a vertex $i \in V_0$ and passes in turn through vertical and slanted edges has a dense orbit. In other words, we consider the path which passes through the vertices $i, i, i - 1, i - 1, \ldots, i - k, i - k, \ldots$. 
     For simplicity, set $i = 0$, then we have $s(x_{2n}) = s(x_{2n + 1}) = -n$. Thus, $x$ passes through the vertex number $-n$ on levels $2n+1$ and $2n+2$. 
      Let $[(y_0, \ldots, y_n)]$ be any cylinder set that ends at the vertex $j^{(n+1)}$ on the level $n + 1$. If $j^{(n+1)} \in V_{l}(x)$, then we can use a vertical path that starts at $j^{(n+1)}$ to reach a vertex from $V(x)$. Thus, a point from the orbit of $x$ will belong to the cylinder $[(y_0, \ldots, y_m)]$. If $j^{(m+1)} \in V_{r}(x)$, we use the slanted path $sl(j^{(m+1)})$ that starts at $j^{(m+1)}$ to reach a vertex from $V(x)$. Such a path passes through the vertex $j^{(m+1)} - k$ on the level $m + 1 + k$.  If $j^{(m+1)} > 0$ then the path $sl(j^{(m+1)})$ passes through the vertex $0$ on the level $m + 1 + j^{(m+1)}$ and it passes though vertex $-n$ on the level $m + 1 + j^{(m+1)} + n$. Thus, for $n = j^{(m+1)} + m$ and $n = j^{(m+1)} + m - 1$, the slanted path $sl(j^{(m+1)})$ passes through the vertex which belongs to $V(x)$. If $j^{(m+1)} \leq 0$, we shift all the vertices uniformly by the same number $-r(x_m)$ to the right, so that now we have $r(x_m) = 0$, and consider only the part of $\tl B_{IO}$ which starts at the level $m+1$, Thus, we repeat the same proof as in the previous case, i.e. the proof for $j^{(m+1)} > 0$.  

      Note that the same proof works for any path $x \in X_{\tl B_{IO}}$ has infinitely many vertical and infinitely many slanted edges. In other words, $x \in X_{\tl B_{IO}}$ has a dense orbit if and only if $x$ is not tail equivalent to a vertical or to a slanted path. Such a point $x$ will ``intersect'' any slanted path that starts at a vertex from $V_r(x)$ and any vertical path that starts at a vertex from $V_{l}(x)$. Since $\tl B_{IO}$ contains only edges which are vertical or slanted that join a vertex $i$ to a vertex $i - 1$ on a level below, any path which passes through vertices from both $V_r(x)$ and $V_l(x)$ should pass through a vertex from $V(x)$. 
\end{proof}

\subsection{Diagram $B_{\infty}$}\label{subsec:diagramB-infty}
Diagram $B_{\infty}$ is a one-sided generalized Bratteli diagram defined by the lower triangular $\N \times \N$ incidence matrix $F_{\infty}$ which has all entries equal to $1$ on the main diagonal and below the main diagonal (see Figure \ref{Fig:Binfty}):

$$
F_{\infty} = 
\begin{pmatrix}
    1 & 0 & 0 & 0 & \ldots\\
    1 & 1 & 0 & 0 & \ldots\\
    1 & 1 & 1 & 0 & \ldots\\
    1 & 1 & 1 & 1 & \ldots\\
    \vdots & \vdots & \vdots & \vdots & \ddots
    \end{pmatrix}
$$

\begin{figure}[hbt!]
\unitlength=1cm
\begin{graph}(9,4)
  %
   \roundnode{V21}(0,3)
 \roundnode{V22}(2,3)
 \roundnode{V23}(4,3)
 \roundnode{V24}(6,3)
  \roundnode{V25}(8,3)
 \roundnode{V31}(0,1)
 \roundnode{V32}(2,1)
 \roundnode{V33}(4,1)
  \roundnode{V34}(6,1)
    \roundnode{V35}(8,1)
  
 %
 \graphlinewidth{0.025}


    \edge{V31}{V21}
    
    \edge{V32}{V21}
        \edge{V32}{V22}
     
 \edge{V33}{V21}
  \edge{V33}{V22}
 \edge{V33}{V23}
    
   \edge{V34}{V21} 
    \edge{V34}{V22} 
       \edge{V34}{V23} 
   \edge{V34}{V24} 
   
   \edge{V35}{V21}
      \edge{V35}{V22}
         \edge{V35}{V23}
            \edge{V35}{V24}
   \edge{V35}{V25}

 \freetext(8.9,3){$\ldots$}
  \freetext(8.9,1){$\ldots$}
  
  \freetext(0,0.5){$\vdots$}
  \freetext(2,0.5){$\vdots$}
    \freetext(4,0.5){$\vdots$}
      \freetext(6,0.5){$\vdots$}  
        \freetext(8,0.5){$\vdots$}

\end{graph}
\caption{One-sided stationary generalized Bratteil diagram $B_{\infty}$.}\label{Fig:Binfty}
\end{figure}

\smallskip

The diagram $B_{\infty}$ was introduced in \cite[Section 8]{BezuglyiKarpelKwiatkowskiWata2024} as an example of a diagram which has uncountably many probability ergodic invariant measures, and one can explicitly describe these measures using the eigenvectors and eigenvalues of the incidence matrix (see \cite[Theorem 4.3]{BezuglyiKarpelKwiatkowski2024}). The diagram is reducible, since from every vertex number $i > 1$ on a level $n \in \N_0$, we cannot reach a vertex number $j < i$ on a lower level. Proposition \ref{prop:B_infty} shows that the tail equivalence relation on $X_{B_{\infty}}$ is topologically transitive but not minimal. 
\begin{prop}\label{prop:B_infty}
Let $x = (x_n)$ be a path in $X_{B_{\infty}}$. Then the orbit $\mathcal{O}(x)$ is dense in $X_B$ if and only if the set $\{s(x_n)\}_{i = 0}^{\infty} \subset \N$ is unbounded.
\end{prop}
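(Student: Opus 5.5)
The plan is to exploit the monotone structure of $B_{\infty}$. Since $f_{ij} = 1$ exactly when $j \le i$, a vertex $j \in V_n$ is joined to a vertex $i \in V_{n+1}$ if and only if $i \ge j$. Two consequences follow immediately:
\begin{enumerate}[label=\upshape(\alph*), leftmargin=*]
\item for every path $z = (z_k) \in X_{B_{\infty}}$, the sequence of vertices $s(z_k)$ is nondecreasing;
\item for any $v \in V_{n+1}$, any level $m > n+1$ and any $w \ge v$, there is a finite path from $v \in V_{n+1}$ to $w \in V_m$. Such a path can be taken to use vertical edges at $v$ down to level $m-1$, followed by one edge from $v$ to $w$.
\end{enumerate}
Both directions of Proposition~\ref{prop:B_infty} will be read off from these two facts.

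\emph{Sufficiency.} Suppose $\{s(x_n)\}$ is unbounded, and fix an arbitrary cylinder set $[(y_0, \ldots, y_n)]$ with $v = r(y_n) \in V_{n+1}$. By unboundedness there is a level $m \ge n+2$ with $w := s(x_m) \ge v$. By (b), choose a finite path from $v \in V_{n+1}$ to $w \in V_m$. Now define $z$ as follows: $z$ agrees with $y_0, \ldots, y_n$, then follows this connecting path, and then continues with $x_m, x_{m+1}, \ldots$. This $z$ is an infinite path in $X_{B_{\infty}}$, it lies in $[(y_0, \ldots, y_n)]$, and it coincides with $x$ from index $m$ on, so $z \in \mathcal{O}(x)$. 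Since cylinder sets form a base of the topology, $\mathcal{O}(x)$ is dense.

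\emph{Necessity.} Suppose instead that $s(x_n) \le K$ for all $n$. Let $z \in \mathcal{O}(x)$. Then $z_k = x_k$ for all $k \ge N$, for some $N$, so $s(z_N) = s(x_N) \le K$. By (a), $s(z_0) \le s(z_N) \le K$. Hence no point of $\mathcal{O}(x)$ lies in the nonempty open set of paths starting at the vertex $K+1 \in V_0$, which is a union of cylinder sets of length one. Therefore $\mathcal{O}(x)$ is not dense.

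\emph{Main obstacle.} The only point needing care is checking observation (b) and the concatenation in the sufficiency step, in particular that the chosen level satisfies $m \ge n+2$, so that there is room for at least one connecting edge. Everything else is routine bookkeeping with the monotonicity in (a).
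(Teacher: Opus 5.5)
Your proof is correct and follows essentially the same route as the paper: when $\{s(x_n)\}$ is bounded, the monotonicity of vertex numbers along paths keeps the orbit out of a nonempty open set; when it is unbounded, you splice the cylinder onto a tail of $x$ via a finite connecting path. The differences are cosmetic: the paper's connecting path takes the slanted edge first (from $r(y_n)$ to vertex $s(x_m)$ on level $n+2$, then vertically down), whereas yours goes vertically first, and the paper exhibits missed cylinders ending at vertices $\geq M$ on arbitrary levels rather than your level-$0$ cylinders at vertex $K+1$.
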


\begin{proof}
Fist, assume that we have $s(x_n) < M$ for some $M \in \N$ and all $n \in \N$. Notice that from a vertex $i$ on a level $n \in \N$ we can reach only vertices with numbers $j \leq i$ on any level above. Hence, $\mathcal{O}(x) \cap [(y_0, \ldots, y_n)] = \emptyset$ for every cylinder set $[(y_0, \ldots, y_n)]$ with $r(y_n) \geq M$.

Now assume that for every $M \in \N$ there exists $n \in \N$ such that $s(x_n) > M$. 
It follows from the structure of the diagram that $s(x_m) \geq s(x_n) > M$ for all $m \geq n$. Let $[(y_0, \ldots, y_n)]$ be a cylinder set. Then there exists $m \in \N$ such that $s(x_m) > r(y_n)$ and $m > n + 1$. Thus, from a vertex $s(x_m) \in V_m$, we can reach a vertex number $s(x_m)$ on level $V_{n+2}$, and since $s(x_m) > r(y_n)$, there is an edge between the vertex $r(y_n) \in V_{n+1}$ and the vertex number $s(x_m)$ on level $V_{n+2}$. Thus, there is a finite path between $r(y_n)$ and $s(x_m)$, and $\mathcal{O}(x) \cap [(y_0, \ldots, y_n)] \neq \emptyset$.
\end{proof}

\section{Isomorphic generalized Bratteli diagrams}\label{sec:mainres}

\subsection{Properties invariant under isomorphism}

Let $B$ and $B'$ be (standard or generalized) Bratteli diagrams with the corresponding tail equivalence relations $\mathcal{R}$ and $\mathcal{R}'$.
Assume that $B$ is isomorphic to $B'$ via the sequences of vertex bijections $(g_n)_{n = 0}^{\infty}$ and edge bijections $(h_n)_{n = 0}^{\infty}$. It is straightforward to check that the map $\theta: X_B \rightarrow X_{B'}$ defined by $$\theta((x_n)_{n = 0}^{\infty}) = (h_n(x_n))_{n = 0}^{\infty}$$ is a homeomorphism which preserves tail equivalence relation. In other words, for all $x,y \in X_B$, we have $(x,y) \in \mathcal{R}$ if and only if $(\theta(x), \theta(y)) \in \mathcal{R}'$. Thus, all topological properties of the tail equivalence relation such as minimality or topological transitivity 
are preserved under the isomorphism of (standard or generalized) Bratteli diagrams. It also follows that $\mathcal{R}$ and $\mathcal{R}'$ are Borel isomorphic (see \cite{Kechris2024} for more details).

A matrix $F = (f_{ij})$ satisfies the \textit{equal row sum} property (we write $F\in ERS$ or $F\in ERS(r)$) if there exists $r$ such that $\sum_{j} f_{ij} = r$ for all $i$. A matrix $F = (f_{ij})$ has the {\em equal column sum} property ($F \in ECS$ or $F\in ECS(c)$) if there exists $c$ such that $\sum_{i} f_{ij} = c$ for all $j$. We say that a Bratteli diagram $B$ has the $ERS(r_n)$ ($ECS(c_n)$) property, if for all $n$ the incidence matrix $F_n \in ERS(r_n)$ ($ECS(c_n)$). Since isomorphism of Bratteli diagrams preserves levels, it preserves the properties of incidence matrices to have equal row (equal column) sum.

Hence, we obtain the following Proposition \ref{Prop:prop_pres_under_isom}.

\begin{prop}\label{Prop:prop_pres_under_isom}
    Let $B$ and $B'$ be two isomorphic (standard or generalized) Bratteli diagrams and $\mathcal{R}$, $\mathcal{R}'$ be the corresponding tail equivalence relations. Then
\begin{enumerate}[label=\upshape(\roman*), leftmargin=*, widest=iii]

\item If $B$ has the $ERS(r_n)$, $n \geq 0$ ($ECS(c_n)$, $n \geq 0$) property then $\tl B$ has the $ERS(r_n)$, $n \geq 0$ ($ECS(c_{n})$, $n \geq 0$) property.

\item  If $\mathcal{R}$ is topologically transitive (minimal) then $\mathcal{R'}$ is topologically transitive (minimal).

\end{enumerate}
\end{prop}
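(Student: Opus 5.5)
Both parts follow once we translate the isomorphism into an identity on incidence matrices and a homeomorphism of path spaces. One caveat on notation: the statement writes $\tl B$ in part (i), but this is meant to be $B'$, and we prove it for $B'$.

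For part (i), we use Proposition \ref{thm:isomBD}. Write the incidence matrices as $F'_n = P_n F_n P_{n+1}^T$ with permutation matrices $P_n$. Equivalently, at the level of entries,
$$f'^{(n)}_{g_{n+1}(v),g_n(w)} = f^{(n)}_{vw}$$
for all $v\in V_{n+1}$ and $w\in V_n$.

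Fix a row of $F'_n$, say the one indexed by $v'\in V'_{n+1}$, and set $v=g_{n+1}^{-1}(v')$. Since $g_n$ is a bijection, $w\mapsto g_n(w)$ is just a re-indexing of the summation over $V_n$. Therefore
$$\sum_{w'\in V'_n} f'^{(n)}_{v'w'} = \sum_{w\in V_n} f^{(n)}_{vw} = r_n.$$
The sums are of non-negative integers (possibly infinite in principle), so rearranging them is legitimate. The column sums are handled identically by re-indexing with $g_{n+1}$, which gives $c_n$.

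For part (ii), we use the map $\theta((x_n)) = (h_n(x_n))$ described just before the statement. First we check that $\theta$ maps $X_B$ into $X_{B'}$. This follows from $s'\circ h_n = g_n\circ s$ and $r'\circ h_n = g_n\circ r$, which give $s'(h_{n+1}(x_{n+1})) = g_{n+1}(s(x_{n+1})) = g_{n+1}(r(x_n)) = r'(h_n(x_n))$. The inverse is $(y_n)\mapsto (h_n^{-1}(y_n))$, so $\theta$ is a bijection.

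Next, $\theta$ maps cylinder sets to cylinder sets: $\theta([(e_0,\dots,e_n)]) = [(h_0(e_0),\dots,h_n(e_n))]$. The same holds for $\theta^{-1}$, so $\theta$ is a homeomorphism. It also preserves tail equivalence in both directions, because $x_i=y_i$ if and only if $h_i(x_i)=h_i(y_i)$. Hence $\theta(\mathcal O(x)) = \mathcal O'(\theta(x))$.

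The topological conclusions now follow. A homeomorphism maps dense sets to dense sets. So if $\mathcal O(x)$ is dense, then $\mathcal O'(\theta(x))$ is dense, which transfers topological transitivity. Since $\theta$ is surjective, every orbit in $X_{B'}$ has the form $\mathcal O'(\theta(x))$ for some $x\in X_B$, which transfers minimality.

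The one genuine subtlety concerns the weaker relation $\simeq$. If the isomorphism is given only by vertex bijections, the edge bijections $h_n$ are not part of the data. We construct them: for each pair $(v,w)$ we choose any bijection $E(w,v)\to E(g_n(w),g_{n+1}(v))$, which exists because the cardinalities agree. Taking the union of these over all pairs gives $h_n$, and it satisfies the source and range identities above. Thus $\simeq$ implies isomorphism in the sense of Definition \ref{def_isom BD}, and the argument above applies.
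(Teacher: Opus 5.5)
Your proposal is correct and is essentially the paper's argument. The paper likewise obtains (ii) from the tail-equivalence-preserving homeomorphism $\theta((x_n)) = (h_n(x_n))$, and obtains (i) from the fact that an isomorphism preserves levels, so the row and column sums of $F_n$ are merely re-indexed by $g_{n+1}$ and $g_n$. Your construction of the edge bijections $h_n$ from the vertex bijections (so that $\simeq$ gives a full isomorphism) and your reading of $\tl B$ as $B'$ only make explicit what the paper leaves implicit; also, the range condition should read $r'\circ h_n = g_{n+1}\circ r$, which is what your computation actually uses.
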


\begin{definition}
    A generalized Bratteli diagram $B$ is called \textit{connected} if it is connected as an undirected graph.
\end{definition}

Clearly, connectedness as an undirected graph is also preserved under isomorphism. The properties of being stationary or of bounded size are not preserved under isomorphism. Below we show that, in general, irreducibility is also not preserved under isomorphism. 

\subsection{Irreducibility and isomorphism}


In this subsection, we  show that the set of all generalized Bratteli diagrams can be partitioned into two disjoint subsets:

\begin{enumerate} [label=\upshape(\roman*), leftmargin=*, widest=iii]
    \item The diagrams which can be isomorphic only to irreducible generalized Bratteli diagrams. We  call such diagrams \textit{completely irreducible}.

    \item The diagrams which can be isomorphic both to reducible and to irreducible  generalized Bratteli diagrams. We call such diagrams \textit{relatively irreducible}.
    

\end{enumerate}

Theorem \ref{thm:classII} below shows that there are no generalized Bratteli diagrams that are isomorphic only to reducible generalized Bratteli diagrams. It other words, the vertices of every generalized Bratteli diagram can be enumerated in such a way that the diagram becomes an irreducible generalized Bratteli diagram.

First, note that the following result  holds.

\begin{prop}
\label{prop:toptransirred}
     Let $B$ be a generalized Bratteli diagram and $\mathcal{R}$ be the tail equivalence relation on $B$. If $\mathcal{R}$ is topologically transitive then $B$ is isomorphic to an irreducible generalized Bratteli diagram.
\end{prop}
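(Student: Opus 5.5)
We will use a path $x = (x_n)$ with dense orbit to choose, level by level, an enumeration of the vertices that makes the diagram irreducible. Set $v_n = s(x_n) \in V_n$. We then choose bijections $g_n \colon V_n \to \N$ (a one-sided relabelling; a two-sided one works the same way) and take $B'$ to be the diagram obtained by transporting the incidence matrices along $(g_n)$. By construction $B \simeq B'$, so the whole task is to choose the $g_n$ so that $B'$ is irreducible.

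\textbf{Step 1: reachability from $x$.} Let $w \in V_m$ be any vertex. Some finite path from $V_0$ ends at $w$, since the column of $F_{m-1}$ is nonzero, and iterating this reaches back to level $0$. Hence there is a cylinder set $[\ol e]$ with $r(\ol e) = w$. Because $\mathcal O(x)$ is dense, it meets $[\ol e]$, so some $y$ tail equivalent to $x$ passes through $w$. This $y$ coincides with $x$ from some level $N$ on, so there is a finite path from $w$ down to $v_N$, for every sufficiently large $N$ (follow $x$ onwards). Thus every vertex $w \in V_m$ is connected by a downward path to $v_N$ for all $N \ge N(w)$.

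\textbf{Step 2: reachability toward a target.} We also need the converse direction: from a vertex $i \in V_n$ we want to reach any prescribed label $j$ on some level $m > n$. In general $x$ does not reach arbitrary vertices below it, so we use Step 1 in reverse. We define the labelling so that each label $j$ is attached, on infinitely many levels, to a vertex lying on a path that continues from $v_N$ for some large $N$. Since every vertex has an outgoing edge (no column of $F_n$ is zero), from $v_N$ we can follow a downward path. We relabel so that the vertex through which this continuation passes on level $m$ receives label $j$.

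\textbf{Step 3: diagonal bookkeeping.} Enumerate all triples $(i,n,j) \in \N \times \N_0 \times \N$ as a sequence $\tau_1, \tau_2, \ldots$. We construct the $g_n$ inductively, with finitely many labels fixed at each stage. At stage $k$, with $\tau_k = (i,n,j)$, the vertex $w = g_n^{-1}(i)$ may already be determined; if not, we fix it now using any unused vertex. By Step 1 choose $N > n$ large, beyond all levels touched so far, such that $w$ reaches $v_N$. Follow an edge $e$ from $v_N$ to some vertex $u \in V_{N+1}$. We must ensure that $u$ is not yet labelled and that label $j$ is not yet used on level $N+1$; then we set $g_{N+1}(u) = j$. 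This is possible because only finitely many labels are committed on each level at any stage and the level $N+1$ is fresh. Since $N$ was chosen beyond all earlier levels, $u$ is certainly unlabelled, and $j$ is free. To make each $g_n$ a bijection, we interleave stages in which we assign the least unused label on level $n$ to the least unlabelled vertex, with respect to a fixed enumeration of $V_n$, for every level $n$.

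The main obstacle is the interleaving in Step 3. We must keep finitely many commitments per level, so that both conditions hold: every triple is served on a level strictly below $n$, and each $g_n$ ends up surjective and injective. Choosing $N$ beyond every previously touched level handles this. The irreducibility of $B'$ then follows directly from Step 3, since for each $(i,n,j)$ we have produced a path from label $i$ on level $n$ to label $j$ on level $N+1 > n$.
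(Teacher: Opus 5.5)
Your proof is correct and uses the same idea as the paper. Density of $\mathcal{O}(x)$ gives that every vertex reaches $s(x_N)$ for all large $N$, and the labels along (a continuation of) $x$ are then chosen so that every label occurs at arbitrarily deep levels. The paper simply prescribes the labels of $x$'s own vertices as $0,0,1,0,1,2,0,1,2,3,\ldots$ and extends each $g_m$ arbitrarily to a bijection. Your Step 3 reduces to exactly this if you take $u = s(x_{N+1})$, so the dovetailing over triples, and the worry that $x$ does not reach arbitrary vertices, are unnecessary. One small slip: the edge into $w \in V_m$ comes from a nonzero row of $F_{m-1}$, not a nonzero column.
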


\begin{proof}
    Let $x = (x_n) \in X_B$ be a point with dense orbit. Consider a diagram $B'=(V',E')$ isomorphic to $B$ such that the image $\psi(x)$ of $x$ under the isomorphism passes through the vertex number $n$ infinitely many times for every $n$. For instance, the vertices $g(s(x_n))$ might form the following sequence: $0, 0, 1, 0, 1, 2, 0, 1, 2, 3,\ldots$ Since $\mathcal{O}(x)$ is dense in $X_B$, for every cylinder set $[(e_0, \ldots, e_n)] \subset X_B$ there is a finite path between $r(e_n)$ and a vertex $s(x_m)$ for some $m$. Hence, for every cylinder set $[(e_0', \ldots, e_n')] \subset X_{B'}$ there is a finite path from $r'(e_n')$ to $s'(h(x_m)) = g_m(s(x_m))$ for some $m$. Since any number $j \in \N_0$ appears among $\{g_k(s(x_k))\}_{k \in \N_0}$ infinitely many times, we will reach any vertex $j$ from $g_m(s(x_m))$. Thus, the diagram $B'$ is irreducible.
\end{proof}


For instance, diagram $B_{\infty}$ (see Subsection \ref{subsec:diagramB-infty}) and diagrams of infinite odometers (see Subsection \ref{subsec:DIO}) have topologically transitive tail equivalence relations. Below we give an example of a generalized Bratteli diagram with a non topologically transitive tail equivalence relation. Moreover, this diagram 
has countably infinitely many distinct sets $\ov{\mathcal{O}(x^i)}$ such that $X_B = \bigcup_{i = 1}^{\infty} \ov{\mathcal{O}(x^i)}$.

\begin{example}\label{ex:connected_non-transR}
The one-sided stationary generalized Bratteli diagram $B$ shown on Figure \ref{Fig:nontransitnonloccomp} with vertices of each level enumerated by $\N$, has the incidence matrix $F = (f_{ij})$ defined as follows: $f_{11} = 2$; $f_{ii} = 3$ for $i > 1$; $f_{i1} = 1$ for $i > 1$; and $f_{ij} = 0$ otherwise.
The diagram has a unique minimal component for the tail equivalence relation: it is the dyadic odometer which passes through the first vertex on each level of the diagram. For $i \in \N$, let $x^i$ be a path in $X_B$ which eventually passes through the vertex number $i$ on each level of the diagram. Then $X_B = \bigcup_{i = 1}^{\infty} \ov{\mathcal{O}(x^i)}$ and $\ov{\mathcal{O}(x^i)} \cap \ov{\mathcal{O}(x^j)} = \ov{\mathcal{O}(x^1)}$ for all $i \neq j$. Note that all paths in $X_B$ are eventually vertical, and there is no path in $X_B$ with a dense orbit. Thus, the tail equivalence relation on $X_B$ is not topologically transitive.

\begin{figure}
    \unitlength=.8cm
\begin{graph}(9,6)
 \roundnode{V11}(2,5)
 \roundnode{V12}(4,5)
 \roundnode{V13}(6,5) 
  \roundnode{V14}(8,5)
 \roundnode{V21}(2,3)
 \roundnode{V22}(4,3)
 \roundnode{V23}(6,3)
  \roundnode{V24}(8,3)
 \roundnode{V31}(2,1)
 \roundnode{V32}(4,1)
 \roundnode{V33}(6,1)
  \roundnode{V34}(8,1)
  %
 %
 \graphlinewidth{0.025}

 \bow{V21}{V11}{0.09}
  \bow{V21}{V11}{-0.09}
    \edge{V22}{V11}
     
 \bow{V22}{V12}{0.09}
 \bow{V22}{V12}{-0.09}
 \edge{V22}{V12}
 
 \edge{V23}{V13}
 \bow{V23}{V13}{0.09}
 \bow{V23}{V13}{-0.09}
 \edge{V23}{V11}

 \edge{V24}{V14}
 \bow{V24}{V14}{0.09}
 \bow{V24}{V14}{-0.09}
 \edge{V24}{V11}

 \bow{V31}{V21}{0.09}
 \bow{V31}{V21}{-0.09}
   \edge{V32}{V21}
 \bow{V32}{V22}{0.09}
 \bow{V32}{V22}{-0.09}
 \edge{V32}{V22}

 \bow{V33}{V23}{0.09}
 \bow{V33}{V23}{-0.09}
 \edge{V33}{V21}
  \edge{V33}{V23}

 \bow{V34}{V24}{0.09}
 \bow{V34}{V24}{-0.09}
 \edge{V34}{V21}
  \edge{V34}{V24}  
 
    \freetext(8.9,5){$\ldots$}
  \freetext(8.9,3){$\ldots$}
    \freetext(8.9,1){$\ldots$}
    \freetext(2,0.1){$\vdots$}
  \freetext(4,0.1){$\vdots$}
    \freetext(6,0.1){$\vdots$}
    \freetext(8,0.1){$\vdots$}
\end{graph}
\caption{A stationary diagram with non-transitive tail-equivalence relation.}\label{Fig:nontransitnonloccomp}
\end{figure}
\end{example}

We generalize the Proposition~\ref{prop:toptransirred} to obtain the following theorem:

\begin{thm}\label{thm:classII}
    Every generalized Bratteli diagram is isomorphic to an irreducible generalized Bratteli diagram. 
\end{thm}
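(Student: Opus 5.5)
Instead of relying on a single path with a dense orbit, as in Proposition~\ref{prop:toptransirred}, I would relabel the vertices level by level so that every vertex sends, below it, finite paths to every label infinitely often. Recall what irreducibility asks: for all $i,j$ and every $n$ there is some $m>n$ such that label $i$ on level $n$ is joined to label $j$ on level $m$. The basic fact is that every vertex $v\in V_n$ has at least one outgoing edge, because no column of $F_n$ is zero. Hence from $v$ there is at least one infinite downward path $p_v=(v=v_n,v_{n+1},v_{n+2},\dots)$ with $v_k\in V_k$. It is therefore enough to choose, for each level $n$, a bijection $g_n\colon V_n\to\N$ with the following property: for every $v\in V_n$, the sequence of labels $(g_k(v_k))_{k>n}$ along a suitably chosen path $p_v$ takes every value in $\N$. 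Then from label $i$ at level $n$ we reach label $j$ at some level $m>n$, which gives irreducibility.

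\textbf{Construction.} I would enumerate all pairs $(v,j)$, where $v$ ranges over the countable set $V=\bigsqcup_n V_n$ and $j\in\N$, as a single sequence $(v^{(1)},j^{(1)}),(v^{(2)},j^{(2)}),\dots$. The labels $g_k$ are then built by a bookkeeping induction over partial injections $g_k$ that stay finite at every stage. At stage $s$, let $v=v^{(s)}\in V_n$ and $j=j^{(s)}$. Only finitely many labels and vertices have been committed so far, so I choose a level $m>n$ beyond all levels touched at earlier stages, and a vertex $w\in V_m$ reachable from $v$; the vertex $p_v(m)$ will do. On level $m$ nothing is committed yet, so I set $g_m(w)=j$. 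To turn each partial injection into a bijection, I interleave a second kind of stage: take the least unlabeled vertex of $V_k$ (in a fixed enumeration) and give it the least label not yet used on level $k$, for every $k$ handled at that stage, done diagonally. Since each stage adds only finitely many assignments and every level receives infinitely many back-and-forth stages, every $g_k$ becomes a bijection $V_k\to\N$. Defining $F'_n=P_nF_nP_{n+1}^T$ via Proposition~\ref{thm:isomBD}, with $P_n$ the permutation matrix of $g_n$, yields an isomorphic diagram $B'$.

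\textbf{Main obstacle.} The delicate point is compatibility. When the pair $(v,j)$ is processed, the required level $m$ must still have both label $j$ and the vertex $w$ unassigned. Choosing $m$ larger than every level touched so far guarantees this, since each stage touches only finitely many levels and the back-and-forth stages can be arranged to touch only finitely many as well. Finally, because each pair $(v,j)$ is treated, every vertex $v$ reaches every label $j$ at some level $m>n$. The ``infinitely often'' part of the definition is not even needed, since irreducibility quantifies over all $n$ and over all vertices on level $n$, and each of these is handled separately. In particular the argument never uses transitivity of $\mathcal R$, which is why it extends Proposition~\ref{prop:toptransirred}.
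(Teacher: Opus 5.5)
Your proof is correct and follows essentially the same strategy as the paper's. Both relabel so that a countable family of downward paths meeting every vertex passes through every label. Both place the forced labels on fresh, pairwise disjoint levels, so at most one vertex per level is constrained, and then complete each level to a bijection.

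The differences are minor. You take one path $p_v$ per vertex instead of the paper's countable dense set $\{x^i\}$ with its Toeplitz-type block scheme, so you need neither the density argument nor the ``infinitely often'' property. You also spell out the back-and-forth completion to bijections, which the paper leaves implicit.
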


\begin{proof} 
    Let $B$ be a generalized Bratteli diagram. Then $X_B$ is a Polish space, hence, there exist a countable set of paths $\{x^{i}\}_{i = 0}^{\infty} \subset X_B$ which is dense in $X_B$. We show that there exists an enumeration of the vertices of $B$ such that for all $n$, the set $V_n$ is identified with $\N_0$, and all the paths $\{x^{i}\}_{i = 0}^{\infty}$ pass through the vertex number $j$ infinitely many times for every $j \in \N_0$.
We use the construction which is similar to the construction of Toeplitz sequences (see, e.g., \cite{Kurka2003} or \cite{Downarowicz2005}). Enumerate the vertices of $B$ so that the path $x^0$ passes through the vertex number $0$ on level $0$. Then we do not specify through which vertex  $x^0$ passes on level $1$. Then let $x^0$ pass  through vertex $0$ on level $2$ and through vertex $1$ on level $3$. We do not specify though which vertices $x^0$ passes on levels $4$ and $5$. Let $x^0$ pass through vertex $0$ on level $6$, through vertex $1$ on level $7$, and vertex $2$ on level $8$. 
We do not specify though which vertices $x^0$ passes on levels $9,10,11$, 
and so on. In other words, we consider sequence of vertices through which the path $x^0$ passes. Starting with the first vertex $s(x^0)$ and $n = 1$, we enumerate consecutive blocks of vertices of length $n$ by the numbers $0, \ldots, n-1$, and we leave the next block of vertices of length $n$ without forcing any enumeration. We show the procedure below. We denote by $*$ the vertices of $x^0$ without specified enumeration. The positions of the numbers in the sequence below correspond to the levels where we enumerate the vertices of $x^0$ by the given numbers:
\begin{equation}\label{x^0}
0*01**012***0123****\ldots
\end{equation}
Now we enumerate some of the vertices of the path $x^1$. We will enumerate only those vertices which belong to the levels, for which we didn't specify enumeration for $x^0$. In such a way, we enumerate the vertices of $x^0$ and $x^1$ on independent levels. Moreover, among the blocks of vertices, which are not enumerated for $x^0$, we keep every second block of vertices not enumerated for $x^1$.  
In other words, for $x^1$, we enumerate vertices only from the levels that correspond to the blocks of stars $*$ of odd lengths. In those blocks of odd lengths $2n-1$, we fill in only initial positions of lengths $n$ by the symbols $0, \ldots, n-1$. Hence, $x^1$ passes through vertex $0$ on level $1$, then through vertex $0$ on level $9$, vertex $1$ on level $10$, and so on. We show the enumerated vertices for $x^1$ on the corresponding levels with barred symbols:
$$
0\ov{0}01**012\ov{0}\ov{1}*0123****\ldots
$$
We don't use anymore in the construction the levels that corresponded to the blocks of stars of odd lengths in \eqref{x^0}, even if we didn't fully fill them in. For instance, we will never specify the number of vertex on level $11$ for any path $x^i$, where $i \in \N_0$. 
Now we enumerate in a similar way some of the vertices of the path $x^2$, which belong to the levels, where we haven't enumerated vertices of $x^0$ and $x^1$. Again, among the blocks of stars from \eqref{x^0} of even lengths, we leave every second block not enumerated. The numbers of vertices for $x^2$ on the corresponding levels are shown below by underlined symbols:
$$
0\ov{0}01\underline{0}*012\ov{0}\ov{1}*0123****\ldots
$$
We repeat the procedure for each $x^i$, $i \in \N_0$. Thus, every path $x^i$ will pass through every vertex $j \in \N_0$ infinitely many times.

Now we show that with the enumeration above, the diagram $B$ becomes irreducible.
Let $n \in \N_0$ and $i,j \in \N_0$. Consider any cylinder set $[y_0, \ldots, y_n]$ such that $r(y_n) = i$. 
Then there exists $k \in \N$ such that $x^k = (x^k_l)_{l=0}^{\infty} \in [y_0, \ldots, y_n]$. In other words, $x^k_l = y_l$ for $l = 0, \ldots, n$. 
Since $x^k$ passes through all vertices number $j$ infinitely many times for every $j \in \N_0$, there exists level $m > n+1$ such that $s(x^k_m) = j$. Thus, there is a finite path between a vertex $i$ on level $n$ and a vertex $j$ on some level $m > n$.
\end{proof}

\begin{remark}
Note that Theorem \ref{thm:classII} also holds for standard Bratteli diagrams with the same number of vertices on each level, with a similar proof.
\end{remark}


Below we provide examples of completely and relatively irreducible generalized Bratteli diagrams, and give necessary conditions and sufficient conditions for a diagram to be completely or relatively irreducible.

\begin{example}[A completely irreducible generalized Bratteli diagram]\label{ex:GBDclass1} The Bratteli diagram $B_{RS}$ corresponding to the renewal shift (see Subsection \ref{subsec:BD_renewal}) is completely irreducible. Clearly, diagram $B_{RS}$ is irreducible: for every level $n$ and every vertex $i \in V_n$ we can reach vertex $0 \in V_{n+i}$, and from vertex $0  \in V_{n+i}$ we can reach any other vertex $j \in V_{n + i + 1}$. This proof does not depend on the enumeration of the vertices. In other words, let $B'=(V',E')$ be isomorphic to $B_{RS}$ and $g_n \colon V_n \rightarrow V_n'$, $n \geq 0$ be the corresponding maps from the definition of isomorphism. Then $B'$ has the unique vertex $g_n(0)$ on each level $n$ such that $E(g_n(0),v) \neq \emptyset$ for all $v \in V'_{n+1}$. In the Bratteli diagram $B_{RS}$, for all $n$, the vertex $0 \in V_{n+i}$ is joined by a path of length $i$ to the vertex $i \in V_n$. Hence, for all $n, i \in \N_0$ the vertex $g_{n+i}(0) \in V'_{n+i}$ is joined by a path of length $i$ to the vertex $g_n(i) \in V_n'$. 
Moreover, for every $n$ and every vertex  $i' \in V'_n$, there exists $i \in V_n$ such that $i' = g_n(i)$. Thus, for all $n$ and all $i' \in V'_n$, we have $E(i', g_{n+i}(0)) = E(g_n(i), g_{n+i}(0)) \neq \emptyset$. Therefore, for every $n$ and every vertex $i'\in V'_n$ we can reach the corresponding vertex $g_{n+i}(0)$, and from $g_{n+i}(0)$ we can reach any vertex $j$ on level $V'_{n+i+1}$. Hence, the diagram $B'$ is irreducible.
\end{example}

The following proposition is a straightforward generalization of Example~\ref{ex:GBDclass1}. 

\begin{prop}\label{Prop:suff_cond_irred_isom}
    Let $B$ be a generalized Bratteli diagram. Assume that
    
    (i) for every $n \geq 0$ there is a vertex $w_n \in V_n$ such that $E(w_n,v) \neq \emptyset$ for all $v \in V_{n+1}$,

    (ii) for every vertex $w$ on every level $n$ there is a finite path from from $w$ to $w_m$ for some $m > n$.

    Then $B$ is completely irreducible.
\end{prop}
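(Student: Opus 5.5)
The plan is to follow the argument of Example~\ref{ex:GBDclass1}. The idea is that conditions (i) and (ii) are stated purely in terms of the graph structure, namely the existence of vertices and of finite paths between them. They do not refer to any enumeration of the levels. So they transfer to every diagram isomorphic to $B$, and they then imply irreducibility directly.

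\textbf{Step 1: the conditions transfer to every isomorphic diagram.} Let $B' \simeq B$ via vertex bijections $(g_n)$, and set $w'_n = g_n(w_n) \in V'_n$. Since $f^{(n)}_{vw} = {f'}^{(n)}_{g_{n+1}(v),g_n(w)}$, multiplying incidence matrices (equivalently, using Proposition~\ref{thm:isomBD}, $F'_{m-1}\cdots F'_n = P_{m-1}F_{m-1}\cdots F_n P_n^T$) gives
\[
|E(g_n(u), g_m(v))| = |E(u,v)| \quad \text{for all } u\in V_n,\ v \in V_m,\ m>n.
\]
Because each $g_n$ is surjective, every vertex of $V'_{n+1}$ has the form $g_{n+1}(v)$. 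Hence $E(w'_n, v') \neq \emptyset$ for all $v' \in V'_{n+1}$, which is (i) for $B'$. Similarly, every $w' \in V'_n$ equals $g_n(w)$ for some $w$, so $E(w', w'_m) \neq \emptyset$ for some $m>n$, which is (ii) for $B'$.

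\textbf{Step 2: (i) and (ii) imply irreducibility, for any identification of the levels with a common index set.} It therefore suffices to show that any generalized Bratteli diagram satisfying (i) and (ii) is irreducible, whatever enumeration of the levels is chosen. Fix labels $i,j$ and a level $n$. Let $w$ be the vertex labelled $i$ in $V_n$. By (ii) there are $m>n$ and a finite path from $w$ to $w_m$. By (i) the vertex $w_m$ is joined to every vertex of $V_{m+1}$, in particular to the vertex labelled $j$ there. Concatenating gives a path from $i \in V_n$ to $j \in V_{m+1}$ with $m+1>n$, so the diagram is irreducible.

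\textbf{Step 3: conclusion.} Combining Steps 1 and 2, every diagram isomorphic to $B$ is irreducible. In particular $B$ itself is irreducible, being isomorphic to itself, so $B$ is completely irreducible. No step is a real obstacle. The only point needing care is the path-count invariance in Step 1, which is immediate from the matrix identity. One should also note that concatenation is valid because paths compose across levels: the endpoint $w_m$ of the first path is the source of the edge into $V_{m+1}$.
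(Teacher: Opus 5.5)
Your proof is correct and is essentially the paper's argument. The paper states this proposition as a direct generalization of Example~\ref{ex:GBDclass1}, where one likewise transports the distinguished vertices $g_n(w_n)$ and the connecting paths through the vertex bijections, then concatenates a path into $g_m(w_m)$ with an edge to an arbitrary vertex of level $m+1$. One small fix: since $F_n$ is indexed by $V_{n+1}\times V_n$, one has $F'_n = P_{n+1}F_nP_n^T$, so the telescoped product should read $P_m F_{m-1}\cdots F_n P_n^T$ rather than $P_{m-1}F_{m-1}\cdots F_nP_n^T$; your direct path-count invariance does not depend on this.
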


Note that condition (i) in Proposition~\ref{Prop:suff_cond_irred_isom} is not sufficient for a diagram $B$ to be irreducible. Indeed in the diagram $B_{\infty}$ (see Subsection \ref{subsec:diagramB-infty}), the first vertex of each level has an edge to all vertices on a level below, but the diagram is reducible.

\begin{example}[Relatively irreducible Bratteli diagrams] Bratteli diagrams of bounded size (see Subsection \ref{subsec:bdd}) are relatively irreducible. 
Indeed, let $B=(V,E)$ be a generalized Bratteli diagram of bounded size  with parameters $(t_n, L_n)$. For every $w \in V_0$, let $Y_w^+$ denote the set of all infinite paths which start at $w$ and then pass through the rightmost possible vertex on each level, i.e. for every $m \in \mathbb{N}$, the paths from $Y_w^+$ go through the vertex $w + \sum_{i = 0}^{m-1} t_i$ on level $m$.


\begin{prop}
    Let $B$ be a generalized Bratteli diagram of bounded size. Then $B$ is relatively irreducible. 
\end{prop}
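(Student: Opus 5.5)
By the dichotomy just established (Theorem \ref{thm:classII}: every diagram is isomorphic to an irreducible one), it is enough to exhibit, for a given diagram $B$ of bounded size, one isomorphic diagram $B'$ that is reducible. Reducibility only requires a single pair of vertices $i,j$ and a level $n$ such that $j$ is never reached from $i\in V_n$ on any lower level. So I would find a closed, non-dense set of vertices that is forward-invariant in $B$, and then choose an enumeration in which this set avoids some fixed label forever.

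The natural candidate is the lower cone of a vertex. Fix $w\in V_0$ and let $C=\bigcup_{m\ge 1}C_m$, where $C_m=r(E(w,V_m))$ is the set of vertices on level $m$ reachable from $w$. By Lemma \ref{lemma_bdd_size_cone} (or directly from \eqref{eq: Bndd size}), $C_m\subset\{w-T_m,\dots,w+T_m\}$ with $T_m=\sum_{i<m}t_i$. Hence every $C_m$ is finite, and $V_m\setminus C_m$ is infinite, since $V_m=\Z$. The sets $Y^+_w$ and the rightmost paths introduced just before the statement play the same role: every path from $w$ stays inside the band bounded by the extreme paths $w\pm T_m$.

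Next I would build vertex bijections $g_m\colon \Z=V_m\to\N_0$ for the new diagram $B'$. On level $0$, put $g_0(w)=0$ and choose the rest arbitrarily. For $m\ge1$, biject $V_m\setminus C_m$ onto $\{0,1,2,\dots\}$ so that label $0$ is used there, and biject the finite set $C_m$ onto labels taken from a disjoint infinite set; this is possible because $V_m\setminus C_m$ is countably infinite. A cleaner choice is to send $C_m$ to labels $\ge 1$ and ensure that $0$ is assigned to some vertex outside $C_m$. Each $g_m$ is a bijection $\Z\to\N_0$, so $B'\simeq B$ with respect to $(g_m)$, and $B'$ is a generalized Bratteli diagram whose incidence matrices are obtained as in Proposition \ref{thm:isomBD}.

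Finally, in $B'$ the vertex $0\in V'_0$ (the image of $w$) reaches on level $m\ge1$ only the labels $g_m(C_m)$, which never contain $0$. Thus $0$ does not reach $0$ on any level $m>0$, so $B'$ is reducible. Since $B$ is also isomorphic to an irreducible diagram by Theorem \ref{thm:classII}, $B$ is relatively irreducible. The only delicate point is making sure that $C_m\ne V_m$, and the finiteness supplied by bounded size gives this immediately.
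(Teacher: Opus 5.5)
Your proof is correct. It shares the paper's core idea: relabel the levels so that the confined lower cone of a level-$0$ vertex permanently misses some label, and take the irreducible copy from Theorem \ref{thm:classII}.

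The relabeling, however, is different. The paper uses the explicit level-dependent translation $g_n(v)=v-\sum_{i=0}^{n-1}t_i$. This keeps the diagram two-sided and turns the rightmost paths $Y^+_w$ into vertical paths. It also makes \emph{every} $w\in V_0$ simultaneously unable to reach any label larger than $w$, using the one-sided band bound $v\le w+\sum_{i<m}t_i$.

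You use only that each $C_m$ is finite, and in fact only that $C_m\neq V_m$ for all $m\ge 1$, together with an otherwise arbitrary bijection onto $\N_0$. Your argument is therefore less geometric but more general. It shows that any generalized Bratteli diagram with a vertex whose forward cone never exhausts a whole level is relatively irreducible, which also yields Theorem \ref{Thm:isomredus} at once.

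One small slip: your first description of the level-$m$ bijection is inconsistent. If $V_m\setminus C_m$ is mapped onto all of $\{0,1,2,\dots\}$, no labels remain for $C_m$. Your \emph{cleaner choice} fixes this and is exactly what is needed: send one vertex $u_m\notin C_m$ to $0$ and biject $V_m\setminus\{u_m\}$ with $\N$.
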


\begin{proof}
    Let $(t_n,L_n)$ be the parameters for $B$ from the definition of a diagram of bounded size. For every $n \geq 1$ and $v \in V_n$, let $g_n(v) = v - \sum_{i = 0}^{n-1} t_{i}$ and $h_n$ be the corresponding bijection between the edges. Then in the new diagram $B'$, the images of the slanted paths from $Y_w^+ \subset X_B$ will be vertical paths, and any vertex $w \in V_0$ will be joined by finite paths only with vertices $v \leq w$ on lower levels.
\end{proof}

Clearly, all reducible generalized Bratteli diagrams such as diagrams of infinite odometers (see Subsection \ref{subsec:DIO}) or the diagram $B_{\infty}$ (see Subsection \ref{subsec:diagramB-infty}) are relatively irreducible.

\end{example}



Throughout the paper, by a neighbourhood of a point $x$ we mean any set $U$ that contains an open set containing $x$.

\begin{thm}\label{Thm:isomredus}
    Let $B$ be a generalized Bratteli diagram. 
    If $B$ has a point with a compact neighbourhood then $B$ is relatively irreducible. 
    
\end{thm}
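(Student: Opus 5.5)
The plan is to show that $B$ is isomorphic to a \emph{reducible} generalized Bratteli diagram. By Theorem~\ref{thm:classII}, $B$ is also isomorphic to an irreducible one, so $B$ is then relatively irreducible. The reducible model will come from an enumeration of the vertices in which the lower cone of a suitable vertex never contains the vertex numbered $1$.

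\textbf{Step 1 (a compact cylinder).} Let $x$ have a compact neighbourhood $K$. Since $K$ contains an open set around $x$ and cylinder sets form a base, there is a cylinder $[\ol e] = [(e_0,\ldots,e_n)]$ with $x \in [\ol e] \subset K$. Cylinders are clopen, so $[\ol e]$ is a closed subset of a compact set and hence compact. Put $v = r(e_n) \in V_{n+1}$.

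\textbf{Step 2 (the lower cone of $v$ is finite on every level).} For $m > n+1$ let $C_m \subset V_m$ be the set of vertices joined to $v$ by a finite path. I claim $C_m$ is finite and nonempty. It is nonempty because no column of an incidence matrix is zero, so every vertex has an outgoing edge and every finite path extends to an infinite one. For finiteness, suppose $C_m$ were infinite. Then the extensions of $\ol e$ to finite paths ending at level $m$ would give infinitely many pairwise disjoint nonempty clopen sets covering $[\ol e]$. This is an open cover with no finite subcover, contradicting compactness. I expect this to be the main point needing care: it uses both that cylinders are clopen and the extendability of finite paths to infinite paths.

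\textbf{Step 3 (the enumeration).} On levels $0,\ldots,n$ choose arbitrary bijections $g_k \colon V_k \to \N_0$. On level $n+1$ choose $g_{n+1}$ with $g_{n+1}(v)=0$. For each $m>n+1$, map the finite set $C_m$ bijectively onto an initial segment of the even numbers $\{0,2,\ldots,2|C_m|-2\}$. Map the infinite set $V_m\setminus C_m$ bijectively onto the remaining (infinite) set of natural numbers. Each $g_m$ is then a bijection $V_m\to\N_0$. By Proposition~\ref{thm:isomBD} the resulting diagram $B'$ is isomorphic to $B$.

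In $B'$, the vertex $0\in V'_{n+1}$ is joined only to vertices in $g_m(C_m)$ for $m>n+1$, and these are even numbers. So the vertex $1$ is never reached from $0 \in V'_{n+1}$ on any lower level. Hence $B'$ is reducible, and together with Theorem~\ref{thm:classII} this shows that $B$ is relatively irreducible.
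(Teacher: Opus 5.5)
Your proof is correct and follows the same route as the paper: a compact cylinder $[(e_0,\ldots,e_n)]$ makes the set of vertices reachable from $r(e_n)$ finite on every lower level, relabelling those levels so that this finite cone avoids a fixed label gives an isomorphic reducible diagram, and Theorem~\ref{thm:classII} supplies the irreducible copy. The only real difference is the relabelling, and on that point your version is cleaner: the paper shifts each level of $\mathbb{Z}$ so that a ``leftmost'' vertex of the cone receives the label of $r(e_n)$, which works only if that vertex is taken to be the minimum of the whole cone on each level rather than the leftmost vertex reachable from the previous one; your placement of the cone on even labels needs no such choice, and you also prove the finiteness of the cone via compactness, which the paper only asserts.
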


\begin{proof} 
Enumerate the vertices $V_n$ of each level of $B$ by integers. Assume that $X_B$ possesses a point $x = (x_n)$ with a compact neighbourhood $U$. Since all cylinder sets form the base of topology and are closed, there exists a cylinder set $[(x_0, \ldots, x_n)]$ which contains $x$ and is compact. Hence, the set $[(x_0, \ldots, x_n)]$ can be represented as a standard Bratteli diagram, i.e. from the vertex $r(x_n) \in V_{n+1}$ one can reach only finitely many vertices 
    from $V_{i}$
    for each $i > n + 1$.
    Thus, the paths that pass through $r(x_n)$ form a cone similarly to the cones for the diagrams of bounded size. Let $v_{n + 2} \in V_{n+2}$ be the leftmost vertex that can be reached from $r(x_n)$. Let $v_{n+3}$ be the leftmost vertex that can be reached from $v_{n+2}$ and so on. Hence, we obtain a sequence of vertices $\{v_{i}\}_{i = n+2}^{\infty}$, through which passes the leftmost path of the downwards directed cone starting at $r(x_n)$. Consider a shift $\tau_i \colon V_i \rightarrow V_i$ for each level $i > n + 1$ which shifts vertex $v_i$ so that its number becomes equal to the number of $r(x_n)$. Hence, the leftmost path becomes vertical and we cannot reach from $r(x_n) \in V_{n+1}$ any vertex with number less that $r(x_n)$ on any level below. Thus, diagram $B$ is isomorphic to a reducible diagram and cannot be completely irreducible.
\end{proof}

\begin{example}
    Every diagram of bounded size (see Subsection \ref{subsec:bdd}) has a locally compact path space. The diagram $B_{RS}$ which corresponds to the renewal shift (see Subsection \ref{subsec:BD_renewal}) is completely irreducible and does not have points with a compact neighbourhood.
    Indeed, every path $x \in X_{B_{RS}}$ passes through the first vertex infinitely many times. Since the first vertex of each level of $B_{RS}$ has infinitely many outgoing edges, every cylinder set which contains $x$ is not compact.
    
    \end{example}

\begin{remark}
Theorem~\ref{Thm:isomredus} provides a 
necessary condition for a Bratteli diagram to be completely irreducible: such diagram cannot have paths with compact neighbourhouds. However, this condition is not sufficient. For the diagram $B_{\infty}$ (see Subsection \ref{subsec:diagramB-infty}), every vertex has infinitely many outgoing edges, hence there are no points with compact neighbourhoods, but the diagram is reducible. 
\end{remark}

For standard Bratteli diagrams, there is a notion of a simple diagram: it's a diagram such that for every level $n$ there is a level $m > n$ such that $E(v,w) \neq \emptyset$ for all $v \in V_m$ and $w \in V_n$. 
It is well known that a standard Bratteli diagram has a minimal tail equivalence relation if and only if the diagram is simple 
(see, e.g., \cite[Theorem 2.5]{Putnam2010}). For generalized Bratteli diagrams, one cannot use straightforwardly the notion of minimality. Since every set $V_n$ is countably infinite, and every vertex of a generalized Bratteli diagram can have only finitely many incoming edges, it is not possible for a vertex to be connected to all vertices on some level above. The following theorem motivates viewing completely irreducible generalized Bratteli diagrams as analogues of simple standard Bratteli diagrams.


\begin{theorem}\label{thm:Class1_minimalR}
    Let $B$ be a generalized Bratteli diagram and $\mathcal{R}$ be the tail equivalence relation on $B$. If $B$ is completely irreducible then $\mathcal{R}$ is minimal.
\end{theorem}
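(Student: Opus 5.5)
We argue by contraposition: assuming $\mathcal{R}$ is not minimal, we will build an explicit enumeration of the vertices of $B$ under which the diagram is reducible. This shows that $B$ is relatively irreducible.

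\emph{Step 1: reduce non-minimality to a combinatorial statement.} Suppose some $x=(x_k)\in X_B$ has a non-dense orbit. Then there is a cylinder set $[(y_0,\ldots,y_n)]$ that contains no point of $\mathcal{O}(x)$. Put $w=r(y_n)\in V_{n+1}$. A point of $\mathcal{O}(x)$ lies in this cylinder exactly when $w$ is joined by a finite path to $s(x_m)$ for some $m>n+1$: such a path can be concatenated with $(y_0,\ldots,y_n)$ and the tail $(x_k)_{k\ge m}$. So for every $m>n+1$ there is no finite path from $w$ to the vertex $s(x_m)\in V_m$.

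\emph{Step 2: choose a reducible enumeration.} For each level $V_k$, fix a bijection $g_k\colon V_k\to \N_0$ with the following properties.
\begin{itemize}
\item On level $n+1$, set $g_{n+1}(w)=0$.
\item On every level $m>n+1$, set $g_m(s(x_m))=0$.
\item All other vertices are enumerated arbitrarily, as long as each $g_k$ is a bijection onto $\N_0$.
\end{itemize}
By Definition \ref{def_isom BD} and Proposition \ref{thm:isomBD}, relabelling the vertices this way yields a generalized Bratteli diagram $B'\simeq B$. In $B'$, the vertex $0\in V'_{n+1}$ is the image of $w$, and for every $m>n+1$ the vertex $0\in V'_m$ is the image of $s(x_m)$. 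By Step 1, $E(g_{n+1}(w),g_m(s(x_m)))=\emptyset$ for all $m>n+1$. Hence, in Definition \ref{Def:irreducible_GBD}, the choice $i=j=0$ at level $n+1$ fails: no $m>n+1$ admits a path from $0\in V'_{n+1}$ to $0\in V'_m$. So $B'$ is reducible, and $B$ is not completely irreducible.

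\emph{Main obstacle.} The only delicate point is the equivalence in Step 1: the orbit of $x$ meets $[(y_0,\ldots,y_n)]$ if and only if $r(y_n)$ reaches $s(x_m)$ for some $m$. One direction is the concatenation already described. For the converse, suppose $z\in\mathcal{O}(x)\cap[(y_0,\ldots,y_n)]$. Then $z$ agrees with $x$ from some level $m$ on, and we may take $m>n+1$. The segment $(z_{n+1},\ldots,z_{m-1})$ is then a finite path from $w$ to $s(z_m)=s(x_m)$. This proves the equivalence.

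There is a subtlety about levels $k\le n$: the definition of irreducibility requires reachability only to strictly lower levels, so those levels need no special treatment in the enumeration. If the paper's convention identifies vertices with $\Z$ rather than $\N_0$, the same argument works using $0\in\Z$.
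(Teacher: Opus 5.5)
Your proof is correct and follows essentially the same route as the paper. Both arguments take a cylinder $[(y_0,\ldots,y_n)]$ missed by $\mathcal{O}(x)$, relabel so that $r(y_n)$ and the vertices of $x$ on all lower levels receive fixed labels, and use concatenation of paths to show that the relabelled isomorphic diagram is reducible. The only difference is cosmetic: the paper gives $r(y_n)$ the label $0$ and $r(x_m)$, $m\ge n$, the label $1$, which requires noting that $r(y_n)\neq r(x_n)$, whereas your use of label $0$ for both, on disjoint ranges of levels, avoids that check.
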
  

\begin{proof} Assume the contrary. Suppose $\mathcal{R}$ is not minimal. Then there exists a point $x = (x_n) \in X_B$ such that $X_B \setminus \ov{\mathcal{O}(x)} \neq \emptyset$. Hence there exists a cylinder set $[(e_0, \ldots, e_n)]$ which belongs to $X_B \setminus \ov{\mathcal{O}(x)}$. Since $[(e_0, \ldots, e_n)] \cap {\mathcal{O}(x)} = \emptyset$, we have $r(e_n) \neq r(x_n)$.
Consider a diagram $B' = (V', E')$ which is isomorphic to $B$ with the corresponding maps $g_m \colon V_m \rightarrow V_m'$ such that $g_n(r(e_n)) = 0 \in V'_{n+1}$ and $g_m(r(x_m)) = 1 \in V'_{m+1}$ for all $m \geq n$. 
Then there is no finite path between $0 \in V'_{n+1}$ and vertex $1 \in V'_{m+1}$ for all $m > n$. Indeed, if such a path existed, then there would exist a finite path between $r(e_n)$ and $r(x_m)$ in $B$, hence the orbit of $x$ would visit $[(e_0, \ldots, e_n)]$ and we would get a contradiction. Thus, $B'$ is reducible and $B$ cannot be completely irreducible.
\end{proof}

\textit{Open problem.} Assume that the tail equivalence relation $\mathcal{R}$ on a generalized diagram $B$ is minimal. Is it true that $B$ is completely irreducible?

\medskip
Proposition \ref{prop:minloccomp} might be a step towards solving the open problem above.

\begin{prop}\label{prop:minloccomp}
If tail equivalence relation is minimal then either $X_B$ is locally compact or there is no path in $X_B$ with a compact neighbourhood.    
\end{prop}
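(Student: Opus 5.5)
The plan is to argue by dichotomy. Assume $\mathcal{R}$ is minimal and that some path $x \in X_B$ has a compact neighbourhood. The goal is then to show that every path $y \in X_B$ has a compact neighbourhood, so that $X_B$ is locally compact.

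The first step follows the argument in the proof of Theorem~\ref{Thm:isomredus}. Since cylinder sets form a base of clopen sets, there is a compact cylinder set $C = [(x_0,\ldots,x_n)]$ containing $x$. I would show that compactness of a cylinder depends only on its terminal vertex. A cylinder $[(e_0,\ldots,e_k)]$ is homeomorphic, via the map that replaces the initial segment, to the set $X^{(k+1)}_v$ of infinite paths starting at $v = r(e_k)$ in the subdiagram below level $k+1$. Hence $C$ is compact if and only if $X^{(n+1)}_{r(x_n)}$ is compact. The latter holds if and only if the downward tree from $v=r(x_n)$ is finitely branching at every vertex that lies on some infinite path from $v$; this is König's lemma, together with the fact that a closed subset of a product of finite sets is compact. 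In particular, every vertex $u$ on a level $m > n+1$ that is reachable from $v$ by a finite path also has a compact set $X^{(m)}_u$.

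The second step uses minimality. Let $y \in X_B$ be arbitrary. Its orbit $\mathcal{O}(y)$ is dense, so it meets the open set $C$. This gives $z \in C$ with $z$ tail equivalent to $y$, i.e.\ $z_i = y_i$ for all $i \ge N$ for some $N$, and we may take $N > n+1$. Then $u := s(y_N) = s(z_N)$ is reachable from $v = r(x_n)$ along $z$. By the first step, $X^{(N)}_u$ is compact, and therefore the cylinder $[(y_0,\ldots,y_{N-1})]$, which is homeomorphic to $X^{(N)}_u$ and has terminal vertex $u$, is a compact neighbourhood of $y$. Since $y$ was arbitrary, $X_B$ is locally compact.

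I expect the main obstacle to be the clean justification of the first step: that compactness passes from the vertex $v$ to every vertex reachable from it. This should follow directly, because $X^{(m)}_u$ is homeomorphic to the closed subset of $X^{(n+1)}_v$ consisting of paths through a fixed finite path from $v$ to $u$ (extended by a tail), and a closed subset of a compact set is compact. The remaining steps are routine topology.
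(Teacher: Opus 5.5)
Your proof is correct and is essentially the paper's argument run in the direct rather than the contrapositive direction. The paper takes a point $x$ without a compact neighbourhood and uses density of $\mathcal{O}(x)$ to obtain a finite path from the terminal vertex of a compact cylinder around a hypothetical $y$ to a vertex with infinitely many outgoing edges; you instead use density of $\mathcal{O}(y)$ to reach the tail of $y$ from the compact cylinder around $x$, and your clopen sub-cylinder argument spells out the inheritance of compactness by reachable vertices, which the paper uses without proof.
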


\begin{proof}
    Assume that there is a path $x = (x_n) \in X_B$ without a compact neighbourhood. Then for all $n$, the vertex $s(x_{n})$ is joined by a path to a vertex on some level below which has infinitely many outgoing edges. 
    Since $\mathcal{R}$ is minimal, the orbit $\mathcal{O}(x)$ is dense in $X_B$. Assume that there is a point $y = (y_n) \in X_B$ with a compact neighbourhood. Then there exists $N \in \mathbb{N}$ such that the cylinder set $[(y_0, \ldots, y_N)]$ is compact. Since $\ov{\mathcal{O}(x)} = X_B$, there is a finite path between $r(y_N)$ and $s(x_m)$ for some $m \geq N$. Hence, there is a path between $r(y_N)$ and a vertex which has infinitely many outgoing edges. Hence, $[(y_0, \ldots, y_N)]$ cannot be compact. 
\end{proof}

\subsection{Connectedness of Bratteli diagrams}
In this subsection, we discuss connectedness of Bratteli diagrams as undirected graphs. Recall that a graph is called \textit{connected} if there exists a path between every pair of vertices. We discuss the relation between connectedness of a generalized Bratteli diagram, ireducibility, and topological properties of the tail equivalence relation.



\begin{remark}
    Note that for standard Bratteli diagrams, it is usually considered that the level $V_0$ consists of a single vertex $v_0$ which is connected to all vertices on level $V_1$. Thus, every vertex $v$ on any level $V_n$, for $n > 0$, is connected by a finite path to $v_0$. With such definition, standard Bratteli diagrams
    are always connected.
\end{remark}

The example below shows that, in general, the notions of irreducibility and connectedness for a generalized Bratteli diagram are independent. 

\begin{example} Diagram $B_{RS}$ (see Subsection \ref{subsec:BD_renewal}) is irreducible and connected. Diagram from Example \ref{ex:bdd_irred_nontrans} is irreducible but not connected. Diagram $B_{\infty}$ (see Subsection \ref{subsec:diagramB-infty}) is connected but not irreducible. Diagram from Example \ref{ex:bdd_red_nontrans} is reducible and not connected.
\end{example}


Proposition \ref{prop:Rconnect} demostrates the relation between connectedness of a generalized Bratteli diagram and topological transitivity of the corresponding tail equivalence relation.

\begin{prop}\label{prop:Rconnect}
    Let $B$ be a generalized Bratteli diagram and $\mathcal{R}$ be the tail equivalence relation on $B$. If $\mathcal{R}$ is topologically transitive then $B$ is connected.
\end{prop}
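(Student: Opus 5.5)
The plan is to take a point $x=(x_n)\in X_B$ whose orbit $\mathcal{O}(x)$ is dense, and to show that every vertex of $B$ is joined, in the undirected sense, to the vertex sequence $\{s(x_m)\}_{m\ge 0}$ of $x$. Since consecutive vertices $s(x_m), s(x_{m+1})=r(x_m)$ are joined by the edge $x_m$, the set $\{s(x_m)\}$ lies in a single connected component. It therefore suffices to put every vertex of $B$ into that component.

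The first step is to handle a vertex $v\in V_{n+1}$ with $n\ge 0$. By condition (iii) of Definition~\ref{Def:generalized_BD} and the fact that no row of an incidence matrix is zero, $v$ has an incoming edge. Iterating upward, we get a finite path $\ol e=(e_0,\ldots,e_n)$ from some vertex of $V_0$ to $v$. We also need $[\ol e]$ to be nonempty, i.e.\ $v$ must have an infinite continuation downward. This holds because no column of $F_m$ is zero, so every vertex has an outgoing edge. Thus $[\ol e]$ is a nonempty open set, and density of $\mathcal{O}(x)$ gives some $y\in\mathcal{O}(x)\cap[\ol e]$. Then $y$ passes through $v=r(y_n)$, and $y_i=x_i$ for all $i\ge N$ for some $N$, which we may take larger than $n$. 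The finite segment $(y_{n+1},\ldots,y_{N})$ is a directed path from $v$ to $s(y_{N})=s(x_N)$, so $v$ lies in the component of $x$. Vertices of $V_0$ are covered the same way: each has an outgoing edge to a vertex of $V_1$, which is already in the component (alternatively, use a cylinder of length one starting at that vertex).

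Combining these, all vertices of $B$ lie in one component, so $B$ is connected. The only point that needs care is the nonemptiness of the cylinder sets used. Without it, density would say nothing, so this is where I would check that the convention ``no row or column of $F_n$ is zero'' guarantees that every vertex lies on some infinite path starting in $V_0$. Everything else is direct from the definitions.
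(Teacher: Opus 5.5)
Your proof is correct and follows the same route as the paper. The paper takes a point $x$ with dense orbit and joins two arbitrary vertices $i,j$ via orbit points passing through them, which then merge with $x$; you join every vertex to the vertex sequence of $x$, which is equivalent, and your check that cylinders ending at a given vertex are nonempty (no zero rows or columns in $F_n$) is a detail the paper leaves implicit, while the small indexing slip (the segment $(y_{n+1},\ldots,y_N)$ ends at $r(y_N)=s(x_{N+1})$, not $s(x_N)$) is harmless.
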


\begin{proof}
    Let $m,n \in \mathbb{N}_0$, $i \in V_m$ and $j \in V_n$. We show that there exists a finite path in the unordered graph $B$ which joins $i$ and $j$. Let $x = (x_k) \in X_B$ be a path such that $\ov{\mathcal{R}(x)} = X_B$. Then there exist paths $y  = (y_k), z  = (z_k) \in X_B$ such that $y, z \in \mathcal{R}(x)$ and path $y$ visits a cylinder that ends in vertex $i$, path $z$ visits a cylinder that ends in vertex $j$. In other words, $s(y_m) = i$ and $s(z_n) = j$. Since $y, z \in \mathcal{R}(x)$, there exist levels $M, N \in \mathbb{N}$ such that $y_k = x_k$ for all $k \geq M$ and $z_k = x_k$ for all $k \geq N$. Then a portion of path $y$ provides a finite path between vertex $i = s(y_m)$ and vertex $s(y_M) = s(x_M)$, a portion of path $x$ connects $s(x_M)$ and $s(x_N)$ and a portion of path $z$ connects $s(x_N) = s(z_N)$ and $s(z_n) = j$.
\end{proof}

\begin{corol}
    Let $B$ be an irreducible stationary generalized Bratteli diagram. Then $B$ is connected.
\end{corol}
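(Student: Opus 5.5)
The corollary should follow by chaining two earlier results: irreducibility of a stationary diagram gives topological transitivity of $\mathcal{R}$, and Proposition~\ref{prop:Rconnect} turns transitivity into connectedness. The difficulty is that Theorem~\ref{ThmBJKStoptrans} assumes aperiodicity, which we do not have. Example~\ref{ex:bdd_irred_nontrans} shows an irreducible stationary diagram, of period $2$, whose tail relation is not transitive. So I would not go through transitivity at all. Instead I would prove connectedness directly from irreducibility, copying the path-gluing argument in the proof of Proposition~\ref{prop:Rconnect}.

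Fix $i \in V_m$ and $j \in V_n$, and assume without loss of generality that $m \le n$. Since $B = B(F)$ is stationary and $F$ is irreducible, Definition~\ref{Def:irreducible_GBD} (with levels identified with $V_0$) applies: for the vertex $i$ on level $m$ and the vertex $j$ viewed as an element of $V_0$, there is a level $k > m$ and a directed finite path from $i \in V_m$ to $j \in V_k$. It remains to join $j \in V_k$ to $j \in V_n$ in the undirected graph.

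If $k = n$ we are done. Otherwise I would use that every vertex on every level lies on an infinite downward path, because no column of $F$ is zero. Irreducibility applied to $j$ on level $n$ and target $j$ gives a directed path from $j \in V_n$ to $j \in V_{n'}$ for some $n' > n$. Iterating, and doing the same from $j \in V_k$, I obtain two increasing sequences of levels, both containing vertex $j$ and reachable from it. The expected obstacle is that these two sequences may never meet, since the return times of $j$ may all be multiples of the period $p$ while $n \not\equiv k \pmod p$.

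To handle this, I would connect through an earlier level, using backward edges in the undirected graph. Every vertex $v \in V_\ell$ with $\ell > 0$ has an incoming edge from some vertex of $V_{\ell-1}$, because no row of $F$ is zero. Ascending from both $j \in V_k$ and $j \in V_n$ therefore reaches level $0$, at vertices $a$ and $b$. Irreducibility then gives directed paths from $a \in V_0$ and from $b \in V_0$ to the common vertex $0$, arriving at levels $r$ and $s$. Since $F$ is irreducible, the return times of $0$ form a set with gcd $p$ that is closed under addition, so it contains all sufficiently large multiples of $p$. I would then use the standard cyclic-class decomposition of an irreducible matrix (as in \cite{Kitchens1998}) to show that $r \equiv s \pmod p$, using the directed connections between $a$, $b$ and $j$. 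Once the congruence holds, extending both paths by suitable loops at $0$ brings them to the same level, which joins $a$ to $b$ and hence $i$ to $j$.

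I expect this cyclic-class bookkeeping to be the main obstacle. If the congruence cannot be verified in general, the fallback is to prove the corollary only in the aperiodic case, where Theorem~\ref{ThmBJKStoptrans} together with Proposition~\ref{prop:Rconnect} gives it immediately. The remaining risk is that the statement may actually be false for periodic matrices: the period-$2$ diagram of Example~\ref{ex:bdd_irred_nontrans} splits into two components $\ov B_1$ and $\ov B_2$ and is explicitly called not connected in the paper. Checking this example first is what should decide between the general plan and the aperiodic fallback.
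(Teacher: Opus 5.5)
Your closing suspicion is correct, and you should commit to it: the statement is false as written. Consequently the step your main plan hinges on, proving $r \equiv s \pmod p$ from the cyclic-class decomposition, cannot be carried out.

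Example~\ref{ex:bdd_irred_nontrans} is a counterexample. Its edges go from $i \in V_n$ to $i \pm 1 \in V_{n+1}$. So there is a path from $i \in V_n$ to $j \in V_m$ exactly when $m-n \ge |i-j|$ and $m-n \equiv i-j \pmod 2$. Such $m$ always exist, so the diagram is irreducible in the sense of Definition~\ref{Def:irreducible_GBD}. On the other hand, every edge preserves the parity of $i+n$, so $0 \in V_0$ and $0 \in V_1$ lie in different components of the undirected graph.

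In general the cyclic classes are the obstruction, not the tool. Suppose $F$ is irreducible with period $p$. Let $c(v) \in \Z/p\Z$ be the length modulo $p$ of any directed path from a fixed vertex $v_0$ to $v$. This is well defined because every closed path through $v_0$ has length divisible by $p$. Then $f_{vw} > 0$ forces $c(v) = c(w)+1$. Hence $\phi(v,n) = c(v) - n$ is constant along every edge, and so on every connected component. Reading $\phi$ along any closed path through $v_0$ shows that it takes all $p$ values on $V_0$. Therefore an irreducible stationary diagram of period $p \ge 2$ has at least $p$ components, and for irreducible stationary diagrams connectedness is equivalent to aperiodicity.

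What can actually be proved is your fallback, and it is exactly the paper's argument. The paper deduces the corollary in one line from Theorem~\ref{ThmBJKStoptrans} and Proposition~\ref{prop:Rconnect}. But Theorem~\ref{ThmBJKStoptrans} requires $F$ to be aperiodic, so that proof only covers the aperiodic case. The paper itself later lists Example~\ref{ex:bdd_irred_nontrans} as irreducible but not connected.

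In the aperiodic case you can also avoid transitivity altogether. For every pair $u,v$ one has $(F^{\ell})_{vu} > 0$ for all sufficiently large $\ell$. So both $i \in V_m$ and $j \in V_n$ reach the vertex $0 \in V_L$ for all large $L$, and any common such $L$ joins them. The hypothesis ``aperiodic'' has to be added to the statement.
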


\begin{proof}
    The proof follows directly from Proposition \ref{prop:Rconnect} and Theorem \ref{ThmBJKStoptrans}.
\end{proof}

\begin{remark}
    Note that the converse to Proposition \ref{prop:Rconnect} is not true. Indeed, diagram form Example \ref{ex:connected_non-transR} is connected, but its tail equivalence relation is not topologically transitive.
\end{remark}


\textbf{Acknowledgements.} The author is very grateful to the colleagues and collaborators, 
especially, S. Bezuglyi, H. Bruin, P. Jorgensen, J. Kwiatkowski, P. Oprocha, S. Radinger, S.~Sanadhya, T. Raszeja  for valuable discussions.
This research was partially supported by a subsidy from the Polish Ministry of Science and Higher Education for the AGH University of Krakow.

\bibliographystyle{alpha}
\bibliography{ReferencesGBD2}

\end{document}